\newtheorem{thm}{Theorem}[section]
\newtheorem{prop}[thm]{Proposition}
\newtheorem{lem}[thm]{Lemma}
\newtheorem{defn}[thm]{Definition}
\newtheorem{rmk}[thm]{Remark}
\newtheorem{cor}[thm]{Corollary}
\numberwithin{thm}{section}
\DeclareMathOperator\supp{supp}
\newcommand{\R}{\mathbb{R}}
\newcommand{\CZO}{Calder\'on-Zygmund operators}
\newcommand{\CZ}{Calder\'on-Zygmund}
\newcommand{\MCZ}{multilinear Calder\'on-Zygmund}
\author[A. Ghosh,  A. Bhojak, P. Mohanty and S. Shrivastava]{Abhishek Ghosh,  Ankit Bhojak, Parasar Mohanty and Saurabh Shrivastava}
\address{
Abhishek Ghosh\\
Department of Mathematics and Statistics\\
Indian Institute of Technology Kanpur\\
Kanpur-208016, India.}
\email{abhig@iitk.ac.in}
\address{
Ankit Bhojak\\
Department of Mathematics and Statistics\\
Indian Institute of Technology Kanpur\\
Kanpur-208016, India.}
\email{ankitbjk@iitk.ac.in}
\address{
Parasar Mohanty\\
Department of Mathematics and Statistics\\
Indian Institute of Technology Kanpur\\
Kanpur-208016, India.}
\email{parasar@iitk.ac.in}
\address{
Saurabh Shrivastava\\
Department of Mathematics\\
Indian Institute of Science Education and Research Bhopal, Bhopal-462066, India.}
\email{saurabhk@iiserb.ac.in}
\keywords{Multilinear Calder\'on-Zygmund operator, Weights, Upper doubling measure, Geometrically doubling condition}
\subjclass{Primary 42B25; Secondary 42B20.}
\begin{document}

\title[Multilinear Calder\'{o}n-Zygmund operators on non-homogeneous spaces]{Sharp weighted estimates for multilinear Calder\'{o}n-Zygmund operators on non-homogeneous spaces}
%\subtitle{Sharp weighted estimates for multi-linear Calder\'{o}n-Zygmund operators}
%\titlerunning{Sharp weighted estimates for multi-linear Calder\'{o}n-Zygmund operators}        % if too long for running head

%\author{Abhishek Ghosh\textsuperscript{1}\thanks{\noindent\textsuperscript{1} Department of Mathematics and Statistics, Indian Institute of Technology Kanpur, India}\and Ankit Bhojak\textsuperscript{1}\and Parasar Mohanty\textsuperscript{1}\and Saurabh Shrivastava\textsuperscript{2}\thanks{\noindent\textsuperscript{2}Department of Mathematics, Indian Institute of Science Education and Research Bhopal, India}}

%\authorrunning{A. Ghosh et al.} % if too long for running head
%
%\institute{Abhishek Ghosh\at
%\email{abhig@iitk.ac.in}
%\and
% Ankit Bhojak\at
%\email{ankitbjk@iitk.ac.in}           %  \\
%\and
%Parasar Mohanty\at
%\email{parasar@iitk.ac.in}
%\and
%Saurabh Shrivastava\at
%\email{saurabhk@iiserb.ac.in}}
%%Department of Mathematics and Statistics\\
%%Indian Institute of Technology Kanpur-208016, India\\
%% Department of Mathematics\\ Indian Institute of Science Education and Research, Bhopal-462 066, India\\
%\date{Received: date / Accepted: date}
%% The correct dates will be entered by the editor
%

\begin{abstract}
In this article, we address pointwise sparse domination for multilinear Calder\'on-Zygmund operators on upper doubling, geometrically doubling metric measure spaces. As a consequence, we have obtained sharp quantitative weighted estimates for multilinear Calder\'on-Zygmund operators. 
\end{abstract}
\maketitle
\section{Introduction}
Pointwise domination of Calder\'on-Zygmund operators by some special discrete operators (called as ``sparse operators") was initiated by the works of Conde-Alonso and Rey in \cite{CR}, Lerner and Nazarov in \cite{LN}. In \cite{Lacey}, Lacey developed an innovative approach to dominate \CZ~operators by sparse operators and relaxed the log-Dini condition to standard Dini condition. Subsequently, Lerner obtained an elegant proof of the pointwise domination of \CZ~operators by sparse operators in \cite{L15}. In \cite{GT}, Grafakos and Torres initiated the study of multilinear \CZO \; on $\R^n$. Later in \cite{GT1}, they have shown that the multilinear Calder\'on-Zygmund operator can be controlled by the product of Hardy-Littlewood maximal functions in terms of norms and the authors have concluded some weighted boundedness of the multilinear Calder\'on-Zygmund operators in terms of linear Muckenhoupt classes. The question of controlling a multilinear Calder\'on-Zygmund operator by a smaller maximal operator and developing the multilinear weighted theory is addressed by Lerner \textit{et al} in \cite{LOP}. Later its sharp quantitative dependence on $A_{\vec{P}}$ characteristic was addressed in \cite{LMS} by adopting recently developed sparse domination techniques in the multilinear settings. In \cite{DLi}, pointwise sparse domination for multilinear \CZ~operators was achieved in Euclidean setting. Our goal in this article is to address the issue of dominating multilinear Calder\'on-Zygmund operators by sparse operators in the non-homogeneous setting and to achieve quantitative weighted estimates. Our domination of the multilinear Calder\'on-Zygmund operator will be pointwise. To state our results in details we recall the following preliminaries. 

In \cite{T1,T2},  X. Tolsa established the existence of the principal value of the Cauchy integral operator on $\mathbb C$ with respect to arbitrary positive Radon measures $\mu$ satisfying $\mu(D_r)\leq C r$ where $D_r$ represents a disc with radius $r$ and $C$ is a constant independent of $r$.  These works of Tolsa played a deep role in proving the celebrated Painlev\'e's problem (\cite{T2,T3,T4}) as well as a prelude to the study of \CZO \; on non-homogeneous spaces.  Nazarov, Treil, and Volberg (\cite{NTV2,NTV1,NTV3}) have developed an innovative approach to treat \CZO\;  on non-homogeneous spaces.  They obtained the weak end-point result from the estimates on point masses and Whitney decomposition. Tolsa in \cite{T3,T4} developed a suitable \CZ\; decomposition on $\R^n$  with non-doubling measures of polynomial growth, as a consequence, he obtained the weak type end-point boundedness of \CZ~operators on non-homogeneous spaces (see \cite{Tweak}).  T. Hyt\"onen, in \cite{Hyt10}, provided a unified framework to study \CZO\;  in upper doubling, geometric doubling metric measure spaces. Throughout this article  we refer an upper doubling, geometric doubling metric measure space as ``non-homogeneous" space. We start by defining upper doubling measures.
\begin{defn}[\cite{Hyt10,vk}]
A  metric measure space $(X,d,\mu)$ is {\it{upper doubling}} if there exists a dominating function $\lambda:X\times(0,\infty)\rightarrow(0,\infty)$ and a constant $C_{\lambda} > 0$ such that
\begin{enumerate}[i)]
\item
$\mu(B(x,r))\leq \lambda(x,r)$ for all $x\in X, r>0,$ where $B(x,r)$ denotes the ball of radius $r$ with center at $x\in X$.
\item
For every $x\in X$ the function $r\rightarrow \lambda(x,r)$ is non-decreasing.
\item
$\lambda$ satisfies the doubling property: $\lambda(x,r) \leq C_{\lambda} \lambda(x,\frac{r}{2})$ for all $x\in X, r>0$.
\end{enumerate}
 In particular, if $\lambda(x,r) =C r^d$ for some $d>0$ and constant $C>0$  then we say $\mu$ has polynomial growth.

\label{def1}
\end{defn}
  
   \begin{defn}[\cite{vk}]
A metric space $(X,d)$ is called geometric doubling(with doubling dimension $N$) if any ball $B(x,r)\subset X$ can contain at most $N$ centres $\{y_i\}$ of disjoint balls $\{B(y_i,r/4)\}_{i=1}^N$.
\label{gdmeas}
\end{defn}
Recently in \cite{vk}, A. Volberg and P. Zorin-Kranich used the sparse domination to prove the appropriate weighted boundedness of the  Calder\'on-Zygmund operators on non-homogeneous spaces. In this paper, we will address the pointwise domination of multilinear \CZO~by appropriate sparse operators on non-homogeneous spaces.  The weak type weighted boundedness of multilinear Calder\'on-Zygmund operators on non-homogeneous spaces has been considered in \cite{Hu}. In this work, they have assumed the unweighted weak type end-point boundedness for multilinear \CZO.

 The goal of this article is twofold, first, we prove the natural end-point boundedness of \MCZ~operators on non-homogeneous spaces. Secondly, we provide appropriate sparse domination results and quantitative weighted estimates. Using sparse domination techniques we have obtained strong (in contrast to weak type bound in \cite{Hu}) weighted boundedness for multilinear \CZO.  Besides, as we dominate \MCZ~operators pointwise by sparse operators,  our result addresses more exponents than given in \cite{LMS}.  Our bound is also sharp (when restricted to Euclidean setting) and holds with a much weaker assumption than \cite{Hu}. Also, we have proved an analogue of  Cotlar's type inequality for multilinear maximal \CZO\; in this setting. The end-point boundedness of \MCZ~operators on $\R^n$ for polynomial growth measures was addressed in \cite{marti}. Throughout this article $\vec{P}$ will denote the tuple $\vec{P}=(p_{1},p_{2})$ with $1\leq p_{1},p_{2}< \infty$ and $\frac{1}{p}=\frac{1}{p_{1}}+\frac{1}{p_{2}}$.  Given a tuple of weights $\vec{w}=(w_{1},w_{2})$ and a tuple $\vec{P}=(p_1,p_2)$ we set $v_{\vec{w}}=\prod_{j=1}^{2} w_{j}^{\frac{p}{p_j}}$. $A\lesssim B$ is the abbreviation for $A\leq CB$, where $C$ is independent of $A$ and $B$. 
\section{Definitions and Results}
We start by defining multilinear \CZO~ on non-homogeneous spaces.
\begin{defn}
\label{c6defMCZ}
Let $(X,d,\mu)$ be a non-homogeneous space. A Calder\'on-Zygmund kernel $K$ is $\mu$-locally integrable function defined away from the diagonal $x=y_1=y_2$ in $X^3$ and satisfying the following:\\
 \textbf{Size condition:} There exist a constant $C_{K}>0$ such that whenever $x\neq y_j$ for some $j=1,2$,
\begin{align}
\label{size}|K(x,y_1,y_2)|\leq C_{K} \frac{1}{(\lambda(x,d(x,y_1))+\lambda(x,d(x,y_2)))^2}.
\end{align}
\textbf{Regularity conditions:} 
There exist a constant $C_{K}>0$ such that the following regularity conditions hold. 
\begin{align}
\label{reg1}
|K(x,y_1,y_2)-K(x',y_1,y_2)|
&\leq C_{K}\frac{1}{(\lambda(x,d(x,y_1))+\lambda(x,d(x,y_2)))^2}~\omega\textstyle{\left(\frac{d(x,x')}{\sum\limits_{i=1}^2 d(x,y_i)}\right)},
\end{align}
whenever $d(x,x')\leq \frac{1}{2} \max\limits_{i=1,2} d(x,y_i)$. Also similar regularity conditions hold for other components. $\omega$ is a modulus of continuity satisfying the $Dini(1)$ condition i.e., $\omega$ is a monotonically increasing subadditive function with $w(0)=0$ and $\int\limits_{0}^{1}\frac{\omega(t)}{t} dt=||\omega||_{Dini(1)} < \infty$.
\end{defn}
\begin{defn} We say that an operator $T$ is a bilinear Calder\'on-Zygmund operator if it is bounded from $L^{p_1}(X, d, \mu)\times L^{p_2}(X,d,\mu)$ into $L^p(X, d, \mu)$ where $\frac{1}{p}=\sum\limits_{i=1}^{2}{\frac{1}{p_i}}, 1<p,p_1,p_2<\infty$ and there exists a Calder\'on-Zygmund kernel $K$ such that $T$ is represented  for compactly supported bounded functions as,
\begin{equation}
T(f_1,f_2)(x)=\int_{X^2}K(x,y_1,y_2) f_1(y_1)f_2(y_2)d\mu(y_1)d\mu(y_2)
\label{cz}
\end{equation}
 for a.e $x \notin \bigcap\limits_{j=1}^{2}\supp(f_j)$. Corresponding to a bilinear Calder\'on-Zygmund operator $T$, we define the bilinear maximal truncated operator by
\begin{equation}
T^{*}(f_1,f_2)(x)=\sup\limits_{\epsilon>0}|T_{\epsilon}(f_1,f_2)(x)|,
\label{maxCZ}
\end{equation}
where $T_\epsilon$ is defined as following,
$$T_{\epsilon}(f_1,f_2)(x)=\int\limits_{\sum\limits_{i=1}^2 d(x,y_i)^2>\epsilon^2}K(x,y_1,y_2)f_1(y_1)f_2(y_2)d\mu(y_1)d\mu(y_2).$$
\end{defn}
We would like to mention that our definition does not demand the weak-type end point estimate like \cite{Hu}. We require another form of the truncated maximal function for geometric compatibility and it is easier to work with the following truncated maximal operator (see~\cite{GT1} for more details).
\begin{align*}
\tilde{T}^{*}(f_1,f_2)(x)=\sup\limits_{\epsilon>0}|\tilde{T}_{\epsilon}(f_1,f_2)(x)|,
\end{align*}
where, $$\tilde{T}_{\epsilon}(f_1,f_2)(x):=\int\limits_{\max\limits_{i=1,2} d(x,y_i)>\epsilon}K(x,y_1,y_2)f_{1}(y_1)f_{2}(y_2)d\mu(\vec{y}).$$
The following remark says the truncated maximal operators are equivalent upto a maximal function.
\begin{rmk}
\label{comparison}
Denote $S_\epsilon(x)=\lbrace\vec{y}\in X^{2}:\max\limits_{i=1,2} d(x,y_i)\leq \epsilon\rbrace$ and $U_\epsilon(x):=\{\vec{y}\in S_\epsilon(x):d(x,y_1)^2+d(x,y_2)^2\geq \epsilon^2\}$. It is very easy to observe
\begin{align*}
|T_{\epsilon}(f_1,f_2)(x)-\tilde{T_{\epsilon}}(f_1,f_2)(x)|&\leq \int_{U_\epsilon(x)}|K(x,y_1,y_2)||f_{1}(y_1)||f_{2}(y_2)|d\mu(y_1)d\mu(y_2)\\
&\leq C_{K,\lambda}\mathcal{M}_{\lambda}(f_1,f_2)(x),
\end{align*}
where, 
\begin{align}
\label{revmax}
\mathcal{M}_{\lambda}(f_1,f_2)(x):=\sup_{r>0} \frac{1}{\lambda(x,r)^2}\int_{B(x,r)} |f_{1}(y_{1})|  d\mu( y_{1}) \int_{B(x,r)} |f_{2}(y_{2})|  d\mu( y_{2}).
\end{align}
Therefore, $T^{*}(f_1,f_2)$ and $\tilde{T}^{*}(f_1,f_2)$ are pointwise a.e. equivalent upto the maximal function $\mathcal{M}_{\lambda}$. The maximal operator $\mathcal{M}_{\lambda}$ plays a crucial role similar to that of Hardy-Littlewood maximal function in \cite{L15}. It is easy to verify that $\mathcal{M}_{\lambda}$ is bounded from $L^{1}(X,d,\mu)\times L^1(X,d,\mu)$ to $L^{\frac{1}{2},\infty}(X,d,\mu)$ as $\mathcal{M}_{\lambda}$ is bounded by $M^c_{\mu}(f_1)(x)M^c_{\mu}(f_2)(x)$ pointwise and $M^c_{\mu}$ is weak-type (1,1) with respect to $\mu$, where $M^c_\mu$ is the centred Hardy-Littlewood maximal function with respect to $\mu$.
\end{rmk}
Now we state the main sparse domination principle of this article. Our strategy of sparse domination for multilinear Calder\'on-Zygmund operator is motivated by the work of Volberg and Kranich in \cite{vk}. In the absence of a compatible dyadic structure the authors in \cite{vk} have used the David-Mattila cells. We also need them for our purpose and a detailed description of David-Mattila cells is given in Lemma~\ref{dmlemma} (see also \cite{dm,vk}). $\mathscr{D}$ will denote the collection of David-Mattila cells and corresponding to each cell $Q\in \mathscr{D}$, $B(Q)$ will denote the associated ball with respect to $Q$ satisfying certain properties. We recall the definition of a sparse family in this setting. 
\begin{defn}
A family of David-Mattila cells $\mathcal{S}\subset \mathscr{D}$ is said to be $\eta$-sparse, $0<\eta<1$, if for every $Q\in \mathcal{S}$ there exists a measurable set $E_{Q}\subset Q$ such that
\begin{enumerate}[i)]
\item $\mu(E_{Q})\geq \eta
\mu(Q)$ and
\item the sets $\lbrace E_{Q}\rbrace_{Q\in \mathcal{S}}$ are pairwise disjoint.
\end{enumerate}
Given a sparse family $\mathcal S$ in $X$ and a large number $\alpha\geq 200$, the bilinear sparse operator is defined by
$$\mathcal{A_S}(\vec{f})(x)=\sum\limits_{Q\in \mathcal{S}}\left( \prod_{i=1}^{2}\frac{1}{\mu(\alpha B(Q))}\int_{30B(Q)}|f_i| d\mu\right)\chi_{Q}. $$
\end{defn}
Now we are in a position to state the main sparse domination principle.
\begin{thm}
\label{sparsethm} 
Let $T$ be a bilinear Calder\'{o}n-Zygmund operator defined on a non-homogeneous space $(X,d,\mu)$. Let $\alpha\geq 200$ . Then for every bounded subset $X'$ of $X$ and for integrable functions $f_j$, with $\supp(f_j)\subseteq X'$ for $j=1,2$, there exist sparse families $\mathcal G_n,~n\geq 0$, of David-Mattila cells such that the sparse domination 
\begin{eqnarray}\label{sparsedom}
T^*(\vec{f})(x) \lesssim_{T, \alpha} \sum\limits_{n=0}^{\infty} 100^{-n}  \mathcal A_{{\mathcal G}_n}(\vec{f})(x)
\end{eqnarray}
holds pointwise a.e. on $X'$.
\end{thm}

In Proposition~\ref{sparseprop}, we will provide quantitative weighted boundedness of bilinear sparse operators. As a consequence of Theorem~\ref{sparsethm} and Proposition~\ref{sparseprop}, we obtain the following corollary concluding the weighted boundedness of bilinear \CZO~on non-homogeneous spaces.
\begin{cor} 
\label{c6mainresult} 
Let $T$ be a bilinear Calder\'{o}n-Zygmund operator defined on a non-homogeneous space $(X,d,\mu)$. Then for all exponents $1<p_1, p_2<\infty$ satisfying $\frac{1}{p}=\sum\limits_{i=1}^{2}\frac{1}{p_i}$ and any multiple weight $\vec{w}$, we have $$\|T^*(\vec{f})\|_{L^p(v_{\vec{w}})}\lesssim C_{w}\prod_{i=1}^{2}\|f_i\|_{L^{p_i}(w_i)},$$
where the implicit constant is independent of $\vec{w}$,  and 
\[   
C_{w} = 
     \begin{cases}
       \sup\limits_{Q}\frac{v_{\vec{w}}(Q)^{\frac{p'_0}{p}}\prod\limits_{i=1}^{2}\sigma_{i}(\alpha B(Q))^{\frac{p'_{0}}{p'_{i}}}}{\mu(\alpha B(Q))^{2}\mu(Q)^{2(p'_{0}-1)}}\;\; \text{if} ~\frac{1}{2}< p \leq 1\;\; and \;\; p_0=\min\limits_{i} p_i\\
       \sup\limits_{Q}\frac{v_{\vec{w}}(Q)\prod\limits_{i=1}^{2}\sigma_{i}(\alpha B(Q))^{\frac{p}{p'_i}}}{\mu(\alpha B(Q))^2 \mu(Q)^{2(p-1)}}\;\; \text{if} \;\; p\geq \max\limits_{i}p'_{i} \\
       
       \sup\limits_{Q} \frac{v_{\vec{w}}(Q)^{\frac{p'_{j}}{p}}\sigma_{j}(\alpha B(Q))\prod\limits_{i=1,i\neq j}^{2}\sigma_{i}(\alpha B(Q))^{\frac{p'_{j}}{p'_{i}}}}{\mu(\alpha B(Q))^2 \mu(Q)^{2(p'_{j}-1)}}\;\; \text{if} \;\; p'_{j}= \max\lbrace p,p'_{1},p'_{2}\rbrace.\\
     \end{cases}
\]
\end{cor}
\begin{rmk}
Let the underlying measure be a doubling measure and $\vec{w}\in A_{\vec{P}}(\mu)$ i.e., 
$$[\vec{w}]_{A_{\vec{P}}}:=\sup_{B} \left(\frac{1}{\mu(B)}\int_{B}v_{\vec{w}}  d\mu\right) \prod_{j=1}^{2}\left(\frac{1}{\mu(B)}\int_{B} w_{j}^{1-p'_{j}} d\mu\right) ^{\frac{p}{{p'_{j}}}}  \leq K<\infty.
$$
As in this case $\mu(\alpha B(Q))$ and $\mu(B(Q))$ are comparable thus in each of the cases mentioned above, we get  $C_{w}\simeq [\vec{w}]_{A_{\vec{P}}}^{\max(1,\frac{p'_1}{p},\frac{p'_2}{p})}$. Therefore in view of \cite{LMS}, we retrieve sharp constants for Euclidean setting. 

We also like to mention that in \cite{LMS}, the sparse domination was obtained in terms of norms as it was based on Lerner's mean oscillation formula. Therefore the authors  only get the sparse domination and sharp bound  for the range $1\leq p<\infty$, but as our method depends on pointwise domination, our result is true for the range $\frac{1}{2}< p <\infty$.
\end{rmk}
Our strategy for the proof of Theorem~\ref{sparsethm} depends on two ingredients. Firstly, we need to define an appropriate ``grand maximal truncated operator" and secondly, it is quite natural from the ideas developed in \cite{L15} and \cite{vk} that we need some sort of end-point boundedness of the ``grand maximal truncated operator" to conclude our result and to address this we have to first address the end-point boundedness of the bilinear Calder\'on-Zygmund operator in this setting. The following end-point estimate is an extension of the result (page 51 ,\cite{marti})   from polynomial growth measure to upper doubling,  geometric doubling metric measure.
 \begin{thm}
Let $T$ be  a bilinear Calder\'on-Zygmund operator, i.e the associated kernel $K$ satisfies all the properties in Definition~\ref{c6defMCZ} and $T$ is bounded from $L^{p_1}(X,d,\mu)\times L^{p_2}(X,d,\mu)$ to $L^{p}(X,d,\mu)$ where $\frac{1}{p}=\frac{1}{p_1}+\frac{1}{p_2}$ with $1<p,p_1,p_2<\infty$. Then $T$ is bounded from $L^1(X,d,\mu)\times L^1(X,d,\mu)$ to $L^{\frac{1}{2},\infty}(X,d,\mu)$.
\label{endpoint}
\end{thm}
 We also need the following version of the Cotlar's inequality in order to conclude the end-point boundedness of the maximally truncated bilinear \CZO. To state the result we need to define following operators.
   \begin{equation}
\mathscr{M}(f)(x):=\sup_{r>0}\frac{1}{\mu(B(x,5r))}\int_{B(x,r)}|f(y)|d\mu(y)
\end{equation}
\begin{equation}
 M_{\lambda}(f)(x):=\sup_{r>0}\frac{1}{\lambda(x,r)}\int_{B(x,r)}|f(y)| d\mu(y)
 \end{equation}
 From Vitali covering lemma it is straightforward to see both $\mathscr{M}$ and $M_{\lambda}$ are bounded from $L^{1}(X,d,\mu)$ to $L^{1,\infty}(X,d,\mu)$.
       \begin{thm}
\label{T*}
Let $T$ be a bilinear Calder\'{o}n-Zygmund operator defined on an  upper doubling, geometrically doubling metric measure space $(X,d,\mu)$. Then $T^*$ is bounded from $L^{1}(X,d,\mu)\times L^1(X,d,\mu)$ to $L^{\frac{1}{2},\infty}(X,d,\mu)$, in particular $T^*$ satisfies the Cotlar's type inequality:
\begin{align}
\label{cotlar}
\nonumber T^{*}(f_1,f_2)(x) &\leq C_{\eta,\lambda,T}M_{\lambda}(f_1)(x)M_{\lambda}(f_2)(x)+\mathscr{M}_{\eta}(|T(f_1,f_2)|)(x)\\&+\mathscr{M}(f_1)(x)\mathscr{M}(f_2)(x),
\end{align}
where $\eta\in (0,\frac{1}{2})$ and $\mathscr{M}_{\eta}(f):=(\mathscr{M}(|f|^\eta))^{\frac{1}{\eta}}.$
\end{thm}
\textbf{Presentation of the paper:} The proof of Theorem~\ref{endpoint} and Theorem~\ref{T*} is quite independent of the sparse domination principle, therefore to maintain the flow of the paper we postpone them to Section~\ref{endpointresults}. In Section~\ref{mainsection}, we provide proof of Theorem~\ref{sparsethm} and quantitative weighted estimates are achieved in Subsection~\ref{weighted estimates}.
\section{Sparse domination}
\label{mainsection}
The proof of the theorem is quite involved and it uses the bilinear grand maximal truncated operator and a recursive decomposition of David-Mattila cells in order to get the sparse domination. The notion of dyadic lattice in $\mathbb{R}^n$ with a non-doubling measure $\mu$ was introduced by David and Mattila~in \cite{dm}. In~\cite{vk}, the authors observed that the same construction of David-Mattila cells works in general in the case of a geometrically doubling metric measure space.
\begin{lem}[\cite{dm,vk}]
\label{dmlemma}
Let $(X,d,\mu)$ be a geometrically doubling metric measure space with doubling dimension $N$ and locally finite Borel measure $\mu$. If $W$ denotes the support of $\mu$ and $C_0>1$ and $A_0>5000 C_0$ are two given constants, then for each integer $k$, there exists a partition of $W$ into Borel sets $\mathcal D_k=\{Q\}_{Q\in \mathcal D_k}$  with the following properties
\begin{enumerate}[i)]
\setlength\itemsep{1em}
\item For each $k\in \mathbb{Z}$, the set $W$ is disjoint union $W=\cup_{Q\in \mathcal D_k} Q$. Moreover, if $k<l, Q\in \mathcal D_l,$ and $R\in \mathcal D_k$, then either $Q\cap R=\emptyset$ or $Q\subset R$.
\item For each $k\in \mathbb{Z}$ and each cube $Q\in \mathcal D_k$, there exists a ball $B(Q)=B(z_Q,r(Q))$ with $z_Q\in W$ such that $A_0^{-k}\leq r(Q)\leq C_0 A_0^{-k}$ and
$W \cap B(Q) \subset Q\subset W \cap 28 B(Q)=W \cap B(z_Q,28 r(Q))$. Further, the collection $\{5B(Q)\}_{Q\in \mathcal D_k}$ is pairwise disjoint.
\item Let $\mathcal D_k^{db}$ denote the collection of cubes $Q$ in $\mathcal D_k$ satisfying the doubling condition
$$\mu (100 B(Q))\leq C_0 \mu(B(Q)).$$
\item
For the non-doubling cubes $Q\in \mathcal D_k\setminus \mathcal D_k^{db}$, we have $r(Q)=A_0^{-k}$ and
\begin{align}
\label{NDprop}
\mu (c B(Q))\leq C_{0}^{-1} \mu(100 cB(Q))
\end{align}
for~all $1\leq c\leq C_0.$
$\mathscr{D}$ will denote the collection of David-Mattila cells.
\end{enumerate}
\end{lem}
In \cite{L15}, the sparse domination was achieved using a suitable stopping time argument with the help of the  following operator
$$M_{T}(f)(x)=\sup_{Q\ni x}||T(f \chi_{\mathbb{R}^n\setminus 3Q})||_{L^\infty(Q)}.$$
The sparse family was then obtained with the help of the recursive use of Calder\'on-Zygmund decomposition. In euclidean setting the Calder\'on-Zygmund decomposition provides disjoint cubes and hence it is suitable for recursive use but Tolsa's decomposition produces ``almost-disjoint" cubes which may not be suitable for recursive use of the decomposition as the overlap grows very rapidly. Thus our cube selection algorithm will be different and will rely on the structure of David-Mattila cells. In the context of upper doubling, geometric doubling metric measure spaces the corresponding grand maximal truncated operator was employed in \cite{vk}. Motivated by that, let us define the following maximal operator for our purpose.
\begin{defn}
For a fixed cell $Q_0$ and $x\in Q_0$, the local bilinear grand maximal truncated operator is defined by
$$\mathcal M_{T, Q_0} (f_1,f_2)(x)=\sup\limits_{ \substack{P\ni x,\\ P\in \mathcal{D}(Q_0)}}\|T(f_1,f_2)-T(f_1 \chi_{30B(P)}, f_{_2} \chi_{30B(P)})\|_{L^\infty(P)}.$$
%ess\sup\limits_{y\in P}|T(f_1,f_2)(y)-T(f_1 \chi_{30B(P)}, f_2 \chi_{30B(P)})(y)|,$$
where $\mathcal D(Q_0)$ denote the collection of David-Mattila cells contained in $Q_0$.
\end{defn}
%Here we have considered $X^2$ with the uniform metric endowed from the metric on X and where $B(Q)^2$ is a ball in $(X^2,\bar{d})$ which is $B(Q)\times B(Q)$.
In order to obtain end-point boundedness for the grand  bilinear maximal truncated operator one would like to observe the difference between $\mathcal M_{T, Q}$ and $T^*$. The following theorem says that $\mathcal M_{T, Q}$ and $T^*$ are equivalent upto the maximal function $\mathcal{M}_{\lambda}$.
\begin{thm}
\label{propo}
Let $Q_0$ be a David-Mattila cell. For functions $f_1,f_2$ supported in $30B(Q_0)$ we have the following inequality
\begin{align}
|\mathcal M_{T, {Q_0}}(f_1,f_2)(x)-T^{*}(f_1,f_2)(x)|\leq (C_{K,\lambda}+||\omega||_{Dini(1)})\mathcal{M}_{\lambda}(f_1,f_2)(x),
\end{align}
for all $x\in Q_0$ and $\mathcal{M}_{\lambda}$ is the maximal function as in \eqref{revmax}. In particular, $\mathcal{M}_{T, Q_0}$ maps $L^1(X, d, \mu)\times L^1(X, d, \mu)$ to $L^{\frac{1}{2},\infty}(X,d,\mu)$ with norm independent of $Q_0$.
\end{thm}
\begin{proof}
Fix $x\in Q_0$ and $r>0$. Observe that for large $r>0$ (say $r>40r(Q_0)$), $\tilde{T}_{r}(f_1, f_2)(x)=0$, as $f_1, f_2$ are supported on $30B(Q_0)$, so we can chose a David-Mattila cell $P\in \mathcal{D}(Q_{0})$ such that $x\in P$ and $r\sim diam(P)$. Denote  $J_k= (2^{k+1} 30B(Q))^{2}\setminus(2^k 30B(Q))^{2}$. Then for any $x'\in P$,
\begin{align*}
&|\tilde{T}_r(f_1,f_2)(x)|-\mathcal M_{T, {Q_0}}(f_1,f_2)(x)\\
& \leq |\tilde{T}_r(f_1,f_2)(x)|-|T(f_1,f_2)(x')-T(f_1 \chi_{30B(P)}, f_2 \chi_{30B(P)})(x')|\\
&\leq |\tilde{T}_r(f_1,f_2)(x)-(T(f_1,f_2)(x)-T(f_1 \chi_{30B(P)}, f_2 \chi_{30B(P)})(x))|\\
&+|(T(f_1,f_2)(x)-T(f_1 \chi_{30B(P)}, f_2 \chi_{30B(P)})(x))-(T(f_1,f_2)(x')-T(f_1 \chi_{30B(P)}, f_2 \chi_{30B(P)})(x'))|\\
&\leq \int_{(30 B(P))^2\triangle S_r(x)}|K(x,y_1,y_2)||f_1(y_1)||f_2(y_2)|d\mu(y_1)d\mu(y_2)\\
&+\int\limits_{X^{2}\setminus (30 B(Q))^{2}}|K(x,y_1,y_2)-K(x',y_1,y_2)||f_1(y_1)||f_2(y_2)|d\mu(y_1)d\mu(y_2)\\
&\leq \int_{(30B(P))^2 \triangle S_r(x)}\min_{i=1,2}\frac{1}{\lambda(x,d(x,y_i)^2}|f_1(y_1)||f_2(y_2)|d\mu(y_1)d\mu(y_2)\\
&+\sum_{k\geq 0}\int\limits_{J_k}|K(x,y_1,y_2)-K(x',y_1,y_2)|f_1(y_1)||f_2(y_2)|d\mu(y_1)d\mu(y_2)\\
&\leq C_{K}\int_{(30B(P))^2 \triangle S_r(x)} \frac{1}{\lambda(x,30 r(P))^2}|f_1(y_1)||f_2(y_2)|d\mu(y_1)d\mu(y_2)\\
&+C_{\lambda}\sum_{k\geq 0}\int\limits_{(2^{k+1} 30B(Q))^{2}}\frac{1}{\lambda(x,2^{k}r(Q))^2}\;\omega\left(\frac{56r(Q)}{2^{k}r(Q)}\right)|f_1(y_1)||f_2(y_2)|d\vec{\mu}(y)\\
%&\leq C_{K,\lambda}\frac{1}{\lambda(x,30 r(P))^2}\int_{30 B(P)^2}|f_1(y_1)||f_2(y_2)|d\mu(y_1)d\mu(y_2)\\
&\lesssim (C_{K,\lambda}+||\omega||_{Dini(1)})\mathcal{M}_{\lambda}(f_1,f_2)(x).
\end{align*}
Taking supremum over all $r>0$ and the fact that $|T_{r}(f_1,f_2)(x)-\tilde{T}_{r}(f_1,f_2)(x)|\lesssim \mathcal{M}_{\lambda}(f_1,f_2)$, we get,
$$T^*(f_1,f_2)(x)-\mathcal{M}_{T, Q_0}(f_1,f_2)(x)\lesssim (C_{K,\lambda}+||\omega||_{Dini(1)})\mathcal{M}_{\lambda}(f_1,f_2)(x). $$
Analogously we can prove the other side to conclude
\begin{align}
\label{eqcom}
|T^*(f_1,f_2)(x)-\mathcal{M}_{T, Q_0}(f_1,f_2)(x)|\lesssim (C_{K,\lambda}+||\omega||_{Dini(1)})\mathcal{M}_{\lambda}(f_1,f_2)(x),
\end{align}
for $x\in Q_0$. As both $T^*$ (Theorem~\ref{T*}) and $\mathcal{M}_{\lambda}$ maps $L^1(X, d, \mu)\times L^1(X, d, \mu)$ to $L^{\frac{1}{2},\infty}(X,d,\mu)$ the conclusion follows automatically.
\end{proof}

 \subsection{Proof of Theorem~\ref{sparsethm}} \label{MR}
 
\noindent In this section, we provide proof of Theorem~\ref{sparsethm}. The proof of the sparse domination~(\ref{sparsedom}) is constructive and follows a recursive argument. We shall prove a recursive formula involving the operator $\mathcal M_{T,Q_0}$. Before proceeding further, we fix some notations. Let $\alpha\geq 200$ be a large number and we construct the David-Mattila cells (taking $A_0$ sufficiently large) such that $30B(Q)\subset 30B(\hat{Q})$ where $\hat{Q}$ denotes the parent of $Q$. For a cell $Q$, we denote
\begin{align*}
A(\vec{f},Q)&:=\prod_{i=1}^{2}\frac{1}{\mu(\alpha B(Q))}\int_{30 B(Q)}|f_i|d\mu\\
\Upsilon(Q)&:=\frac{{\mu(\alpha B(Q))}^2}{{\lambda(z_{Q},\alpha r(Q))}^2}~~~\mbox{and}~~\tilde{\mathcal{M}}(f_1,f_2)(x)=\sup\limits_{\substack{Q\in \mathscr{D}\\~Q\ni x}}A(\vec{f},Q).
\end{align*}
It is easy to see that $\Upsilon(Q)\leq 1$ and $\tilde{\mathcal{M}}$ is bounded from $L^{1}(X,d,\mu)\times L^1(X,d,\mu)$ to $L^{\frac{1}{2},\infty}(X,d,\mu)$ (by Vitali covering lemma). First we provide the following relation between
the grand maximal truncated operator in two consecutive scales of David-Mattila cells and this will be useful to provide the recursive relation.
\begin{prop}[Consecutive scales]
\label{c6conscales}
Let $Q$ be any David-Mattila cell. Then
 \begin{align}
\label{c6eqre}
\mathcal M_{T,\hat{Q}}(f_1 \chi_{30 B(\hat{Q})},f_2 \chi_{30 B(\hat{Q})})(x)\lesssim C\Upsilon(\hat{Q})A(\vec{f},\hat{Q})+\mathcal M_{T,Q}(f_{1}\chi_{30B(Q)},f_2 \chi_{30 B(Q)})(x)
\end{align}
for all $x\in Q$.
\end{prop}
\begin{proof}
The proof simply follows from the fact that for every cell $P\in \mathcal D(\hat Q)$ with $x\in P$, we have
\begin{align*}
&|T(f_1 \chi_{30 B(\hat{Q})},f_2\chi_{30 B(\hat{Q})})(y)-T(f_1 \chi_{30 B(\hat{Q})} \chi_{30B(P)}, f_2 \chi_{30 B(\hat{Q})} \chi_{30B(P)})(y)|\\
&=\vert\int_{30B(\hat{Q})^2\setminus30B(P)^2}K(y,z_1,z_2)f_1(z_1)f_2(z_2)d\mu(z_1)d\mu(z_2)|\\
&=\vert\int_{30B(Q)^2\setminus30B(P)^2}K(y,z_1,z_2)f_1(z_1)f_2(z_2)d\mu(z_1)d\mu(z_2)\\
&+\int_{30B(\hat{Q})^2\setminus30B(Q)^2}K(y,z_1,z_2)f_1(z_1)f_2(z_2)d\mu(z_1)d\mu(z_2)\vert\\
&\lesssim \mathcal M_{T,{Q}}(f_{1}\chi_{30B(Q)},f_2 \chi_{30B(Q)})(x)+ \frac{1}{\lambda(x,r(Q))^2}\int_{30B(\hat{Q})^2}|f_1(z_1)||f_2(z_2)|d\mu(z_1)d\mu(z_2)\\
&\leq C\Upsilon(\hat{Q})A(\vec{f},\hat{Q})+\mathcal M_{T,Q}(f_{1}\chi_{30B(Q)},f_2 \chi_{30B(Q)})(x).
\end{align*}
Now taking supremum over all such cells concludes the proof. 
\end{proof}
The main argument in the proof of Theorem~\ref{sparsethm}, depends on the following cube selection algorithm. In \cite{L15}, all the cubes are doubling but in our case, we have to face ``good" as well as ``bad" cubes. In the following selection procedure if we face a ``bad" cube we keep on subdividing it until we hit ``good" cubes and this fact is guaranteed due to the construction of David-Mattila cells. Also in this procedure, we assure the fact that the constants $\Upsilon(Q)$ decay exponentially which helps us to sum. This fact follows from the Lemma~3.6
in \cite{vk}. We state it for convenience. 
\begin{lem}
Let $Q_0=\hat{Q_1}\supset Q_1=\hat{Q_2}\supset Q_{2}\dots$ and $Q_1,Q_2,\dots$ are non-doubling, then we have $\Upsilon(Q_k)\leq C_{0}^{-k \frac{l_{0}}{2}}\Upsilon(Q_0)$, where $l_0$ is the maximal positive integer such that $100^{l_{0}}\alpha\leq C_0$.
\end{lem}
\begin{proof}
We will only prove $\Upsilon(Q_1)\leq C_{0}^{-\frac{l_{0}}{2}}\Upsilon(Q_0)$, then the proof follows from recursion. Let $l_0$ be the maximal integer such that $100^{l_{0}}\alpha\leq C_0$. As $Q_1$ is a bad cube, using \eqref{NDprop}, we get
$$\mu(\alpha B(Q_1))\leq C_{0}^{-l_0}\mu(100^{l_0}\alpha B(Q_1))\leq C_{0}^{-l_0}\mu(\alpha B(\hat{Q}_1))=C_{0}^{-l_0}\mu(\alpha B(Q_0)),$$
where the last inequality requires $A_0$ to be very large, for example $A_0=CC_{0}^{100}$ will do.
Now using the doubling property of $\lambda$ we conclude, $\lambda(z_{Q_0}, \alpha r(Q_0))\leq C_{\lambda}^{\left \lceil{\log_{2}A_{0}}\right\rceil}\lambda(z_{Q_1}, \alpha r(Q_1))$. Now if we chose $C_0$ such that $C_{0}^{\frac{l_0}{2}}>C_{\lambda}^{\left \lceil{\log_{2}A_{0}}\right\rceil}$, we get $\Upsilon(Q_1)\leq C_{0}^{-\frac{l_0}{2}}\Upsilon(Q_0)$ and we are done.
\end{proof}
\begin{lem}
\label{c6lemma1}
Let $Q_0\in \mathscr{D}^{db}$ and $f_1,f_2$ be integrable functions with $\supp(f_j)\subseteq 30B(Q_0)$ for $j=1,2$. Then we can find a subset $E \subset Q_0$, families of pairwise disjoint David-Mattila cells $\mathcal{L}_n(Q_0)\subset \mathscr{D},\;\;n=1,2,3,\dots$ contained in $E$, and a family of pairwise disjoint cells $\mathcal{F}(Q_0)\subset \mathscr{D}^{db}$ contained in $E$ such that $\mu(E)\leq \frac{1}{2}\mu(Q_0)$ and for every $P\in\mathcal{F}(Q_0),~ Q\in\mathcal{L}_n(Q_0)$ either $P\subset Q$ or $P\cap Q=\emptyset,$ and
\begin{align}
\nonumber\mathcal{M}_{T,Q_0}(f_1 \chi_{30 B(Q_0)},f_2 \chi_{30B(Q_0)})\chi_{Q_0} &\leq \sum\limits_{P\in \mathcal{F}(Q_0)}\mathcal{M}_{T,P}(f_1 \chi_{30 B(P)},f_2 \chi_{30 B(P)})\chi_{P}\\
&+C A(\vec{f},Q_0)+ C\sum\limits_{n=1}^\infty 100^{-n}\sum\limits_{Q\in \mathcal{L}_{n}(Q_0)}A(\vec{f},Q) \chi_{Q}.
\end{align}
\end{lem}

The following diagram pictorially depicts our selection algorithm.\\
\\
\tikzstyle{decision} = [diamond, draw, fill=white!20, 
text width=3cm, text badly centered, node distance=3cm, inner sep=0pt]
 \tikzstyle{block} = [rectangle, draw, fill=white!20, 
text width=3cm, text centered, rounded corners, minimum height=4em]
\tikzstyle{line} = [draw, -latex']
\tikzstyle{cloud} = [draw, ellipse,fill=red!20, node distance=3cm,
minimum height=5em]
\begin{small}
 \begin{tikzpicture}[node distance = 2.5cm, auto]
% Place nodes
\node [block] (sume) {Recursive Diagram};
%\node [block, below of=sume] (calcu) {2.-Calcule una primer solución básica factible};
\node [block, below of=sume] (decide) {1.Start with a doubling cell $Q_0$};
\node [block, below right =1cm and 1 cm of decide, node distance=5cm] (no) {3.Bad cells~$\mathcal{L}(Q_0)$};
\node [block, below left = 1cm and 1cm of decide, node distance=5cm] (yes) {2.Good cells~$\mathcal{F}(Q_0)$};
\node [block, below of= no] (first) {Bad child $\mathcal{L}_n(Q_0)$};
\node [block, below of= first] (firstgood) {Good child};

% Draw edges
\path [line] (sume) -- node[pos=0.5](z){}(decide);
%\path [line] (calcu) --node[pos=0.5](h){} (decide);    
\path [line] (decide) -| node[pos=0.2,above] {Good} (yes);
\path [line] (yes.west) --++(-1cm,0) |- (z);    
\path [line] (decide) -| node[pos=0.2,above] {Bad} (no);
\path [line] (no) -- node[pos=0.3,below left=.01cm]{Recursive use of Lemma~\ref{c6conscales}}(first);
%\path [line] (yes.west)--(sume);
\path [line] (firstgood.west) --++(-5cm,0) |- (yes);
\path [line] (first) -- node[pos=0.3,below left=.01cm]{Final iteration of Lemma~\ref{c6conscales}}(firstgood);
\end{tikzpicture}
\end{small}
\begin{proof}
Let $Q_{0}\in \mathscr{D}^{db}$, now the boundedness of the operators $\mathcal M_{T,Q_0}~\mbox{and}~ \tilde{\mathcal{M}}$ from $L^1(X, d, \mu)\times L^{1}(X, d, \mu)$ to $L^{\frac{1}{2}, \infty}(X, d, \mu)$ imply that for large enough $\Theta>0$, we have  $\mu(E)\leq \frac{1}{2}\mu(Q_0)$,
where
$$E:=\{ x\in Q_0:\max(\mathcal{M}_{T,Q_0}(f_1,f_2)(x), \tilde{\mathcal{M}}(f_1,f_2)(x))> \Theta A(\vec{f},Q_0)\}.$$
In \cite{L15}, performing Calder\'on-Zygmund decomposition to a set similar to $E$ produces the required sparse family but as we have to face ``good" as well as ``bad" cubes in our case our selection procedure will be different. Let $\mathcal{L}_0(Q_0)$ is the maximal David-Mattila cells inside $E$, this implies $\sum_{Q\in \mathcal{L}_0(Q_0)}\mu(Q)\leq \frac{1}{2} \mu(Q_0)$. These cells may not be doubling. For $x\in E\setminus \bigcup\limits_{\mathcal{L}_0}Q$ we have $\mathcal{M}_{T,Q_0}(f_1,f_2)(x)\leq \Theta A(\vec{f},Q_0)$ and we are in a nice situation.  For a cell $Q\in \mathcal{L}_0(Q_0)$ and $x\in Q$, the maximality of $Q$ implies
$$ess\sup\limits_{y\in \tilde{Q}}|T(f_1,f_2)(y)-T(f_1 \chi_{30B(\tilde{Q})}, f_2 \chi_{30B(\tilde{Q})})(y)|\leq \Theta A(\vec{f},Q_0),$$
for all $Q\subsetneq \tilde{Q}\subseteq Q_0$. Thus we get
\begin{align}
\mathcal M_{T,Q_0}(f_1,f_2)(x)\leq \Theta A(\vec{f},Q_0)+\mathcal M_{T,\hat{Q}}(f_1 \chi_{30B(\hat{Q})},f_2 \chi_{30 B(\hat{Q})})(x).
\label{eqrecursive}
\end{align}
Applying Proposition~\ref{c6conscales} to \eqref{eqrecursive} and the fact that $\Upsilon(\hat{Q})\leq 1$, yield 
\begin{align}
\mathcal M_{T,Q_0}(f_1,f_2)(x)\leq K A(\vec{f},Q_0)+\mathcal M_{T,Q}(f_{1} \chi_{30 B(Q)},f_2 \chi_{30 B(Q)})(x)
\label{eqre1}
\end{align}
for some bigger constant $K$. Now we keep on iterating \eqref{c6eqre} to $Q$ (atmost a finite number of times) until we reach a ``good" cube containing $x$ and this is guaranteed by the fact that $\mu(Q\setminus \bigcup\limits_{R\in S(Q)} R)=0$, where $S(Q)$ denotes  the set of maximal good David-Mattila cells in $Q$ (see Lemma 5.28 and 5.31 in \cite{dm}). Now the construction of David-Mattila cells ensure that this process will terminate after a finite number of steps i.e., there can be only finite chain of nondoubling cubes $Q_1\supset Q_2\dots \supset Q_{J}$ such that $Q_j\in \mathcal{L}_j$ and $x\in Q_j$. We have already pointed out that for each such $Q_j$, $\Upsilon(Q_j)\lesssim 100^{-j}$ if we take $C_0$ to be large enough. We iterate this procedure to each bad cell in the collection $\mathcal{L}_0(Q_0)$. Next, we separate the cells with doubling property and set 
$$\mathcal{F}(Q_0)=\lbrace Q\in \mathcal{L}_0{}(Q_0): Q \;\text{is~doubling}\rbrace~~\text{and}~\mathcal{L}_{1}(Q_0)=\mathcal{L}_{0}(Q_0)\setminus \mathcal{F}.$$
Now for a cell $Q\in \mathcal{D}$ such that $\hat{Q}\in \mathcal{L}_1(Q_0)$, if $Q$ is doubling we select it to the collection $\mathcal{F}(Q_0)$, otherwise it goes to $\mathcal{L}_2(Q_0)$. The exponential decay of the quantities $\Upsilon(Q)$ allow us to sum the intermediate levels and a recursive application of this procedure concludes the proof of the lemma.
\end{proof}

Recall Theorem~\ref{propo}, and note that in order to prove sparse domination of the operator $T^*$, we need sparse domination for the maximal operator $\mathcal{M}_{\lambda}$. The next lemma establishes this sparse domination. $\mathcal{M}_{\lambda}$ is dominated by the dyadic version of it, namely 
$\mathcal M_{\lambda}^{\mathscr{D}}$ which is defined as 
$$\mathcal M_{\lambda}^{\mathscr{D}}(\vec{f})(x):=\sup\limits_{x\in P, P\in \mathscr{D}}\Upsilon(P)A(\vec{f}, P).$$
We require the localized version 
$\mathcal M_{\lambda}^{\mathscr{D},Q_0}$ of $\mathcal M_{\lambda}$ for doubling cell $Q_0$, which is defined by taking the supremum over cells contained in $\mathscr D(Q_0)$, that is,
\begin{align*}
\mathcal{M}_{\lambda}^{\mathscr{D}, Q_0}(\vec{f})(x)&=\sup\limits_{x\in P,~ P\in \mathscr{D}(Q_0)}\prod_{i=1}^{2} \frac{1}{\lambda(z_{P},r(P))} \int_{30 B(P)} |f_{i}|  d\mu\\
&=\sup\limits_{x\in P,~ P\in \mathscr{D}(Q_0)}\Upsilon(P)A(\vec{f}, P).
\end{align*}
\begin{lem}
Let $Q_0\in \mathscr{D}^{db}$ and $f_1,f_2$ be integrable functions with $\supp(f_j)\subseteq 30B(Q_0)$ for $j=1,2$. Then we can find a subset $E\subset Q_0$, families of pairwise disjoint cells $\mathcal{L}_n(Q_0)\subset \mathscr{D},\;\;n=1,2,\dots$ contained in $E$, and a family of pairwise disjoint ``good" cells $\mathcal{F}(Q_0)\subset \mathscr{D}^{db}$ contained in $E$ such that $\mu(E)\leq \frac{1}{2}\mu(Q_0)$ and for every $P\in\mathcal{F}(Q_0),~ Q\in\mathcal{L}_n(Q_0)$ either $P\subset Q$ or $P\cap Q=\emptyset,$ and
\begin{align}
\nonumber~\mathcal{M}_{\lambda}^{\mathscr D, Q_0}(\vec{f})\chi_{Q_0} &\leq \sum\limits_{P\in \mathcal{F}(Q_0)}\mathcal{M}_{\lambda}^{\mathscr{D}, P}(\vec{f})\chi_{P}+C A(\vec{f},Q_0)\\
&+C\sum\limits_{n=1}^\infty 100^{-n}\sum\limits_{Q\in \mathcal{L}_{n}(Q_0)}A(\vec{f},Q)\chi_{Q}.
\end{align}
\end{lem}
The proof of this lemma is similar to the previous lemma. Thus we skip the proof.
\qed

\noindent\textbf{Proof of Theorem~\ref{sparsethm}:}
To obtain the sparse domination Theorem~\ref{sparsethm} we need sparse domination for $\mathcal{M}_{T,Q_0}$ and $\mathcal{M}_{\lambda}$ (see Theorem~\ref{propo}). We only mention it for $\mathcal{M}_{T, Q_0}$.

 As the upper doubling condition ensures there are plenty of ``large-doubling" balls, we start with a ball $D$ such that $\mu(100D)\leq C_0 \mu(D)$, also without loss of generality we may assume that the ball has radius $C_0$. Then the construction of David-Mattila cells ensures that there is a doubling-cell $Q_0$ such that $X'\subset Q_0$. Let $\mathcal{G}_0^0=\lbrace Q_0\rbrace$. Applying Lemma~\ref{c6lemma1} we get collection of good cubes $\mathcal{G}^{Q_0}$ as well as bad cubes $\mathcal{L}^{Q_0}_n$, for $n=1,2,\dots$. Define $\mathcal{G}^1_{n}=\mathcal{L}^{Q_0}_n$, for $n=1,2,\dots$ and $\mathcal{G}_{0}^1=\mathcal{G}^{Q_0}$. Now we initiate the recursive application of Lemma~\ref{c6lemma1}. If $\mathcal{G}_{0}^{k}$ is constructed, now for each doubling-cube $P\in \mathcal{G}_0^{k}$ the above-mentioned process (Lemma~\ref{c6lemma1}) produces non-doubling cubes $\mathcal{L}_n^{P}$ for $n=1,\dots$ and doubling cubes $\mathcal{G}^{P}$. 
 
We separate the bad cubes according to the stage they are appearing, i.e., define for $n=1,2,\dots$,~ $\mathcal{G}_n^{k+1}=\bigcup\limits_{P\in {\mathcal{G}_0^{k}}}\mathcal{L}_n^{P}$ and $\mathcal{G}_{0}^{k+1}=\bigcup\limits_{P\in \mathcal{G}_0^{k}}\mathcal{G}^{P}$. Now one can observe that at each step we are ensuring the following, for $Q\in \mathcal{G}_n^{k}$ we have$$\sum\limits_{R\in \mathcal{G}_n^{k+1};R\subset Q}\mu(R)\leq \frac{1}{2}\mu(Q).$$
$\mathcal{G}_n=\bigcup\limits_{k}\mathcal{G}^{k}_n$ is our desired sparse families. This completes the proof of Theorem~\ref{sparsethm}.\\
\qed 
\subsection{Weighted estimates}
\label{weighted estimates}
\begin{prop}
\label{sparseprop}
Let $1<p_1,p_2<\infty$ be such that $\frac{1}{p}=\sum\limits_{i=1}^{2}\frac{1}{p_i}$. Then for any multiple weight $\vec{w}$ we have $$\|\mathcal{A_S}(\vec{f})\|_{L^p(v_{\vec{w}})} \lesssim C_{w} \prod_{i=1}^{2}\|f_i\|_{L^{p_i}(w_i)},$$ with the implicit constant independent of $\vec{w}$ and $C_w$ as in Corollary~\ref{c6mainresult}.
\end{prop}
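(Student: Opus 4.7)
My plan is to adapt the Lerner--Moen duality-plus-sparseness approach to sparse operator bounds, splitting into three regimes of $p$ that match the formula for $C_\omega$. In all cases I first make the change of variables $\sigma_i := w_i^{1-p_i'}$; using $(1-p_i)(1-p_i')=1$ we have $\|f_i\|_{L^{p_i}(w_i)}^{p_i}=\int |\tilde f_i|^{p_i}\sigma_i\,d\mu$ with $\tilde f_i:= f_i/\sigma_i$, and can rewrite
$$\mathcal{A}_{\mathcal S}(\vec{f})=\sum_{Q\in\mathcal S}\frac{\prod_{i=1}^{m}\sigma_i(30B(Q))}{\mu(\alpha B(Q))^m}\prod_{i=1}^{m}\langle |\tilde f_i|\rangle^{\sigma_i}_{30B(Q)}\chi_Q,$$
where $\langle h\rangle^{\nu}_{E}:=\nu(E)^{-1}\int_E h\,d\nu$. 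Since $\alpha\geq 200$ we have $30B(Q)\subset\alpha B(Q)$, so $\sigma_i(30B(Q))\leq\sigma_i(\alpha B(Q))$, which matches the measures appearing in $C_\omega$. Sparseness supplies pairwise disjoint sets $E_Q\subset Q$ with $\mu(E_Q)\geq\eta\mu(Q)$; this is the device that will convert sums over $\mathcal S$ into single integrals.

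For the principal range $p\geq \max_i p_i'$ I would dualize in $L^{p'}(v_{\vec{w}})$: pairing against $g\geq 0$ with $\|g\|_{L^{p'}(v_{\vec{w}})}=1$ gives
$$\int \mathcal{A}_{\mathcal S}(\vec{f})\,g\,v_{\vec{w}}\,d\mu=\sum_{Q\in\mathcal S}\frac{v_{\vec{w}}(Q)\prod_i\sigma_i(30B(Q))}{\mu(\alpha B(Q))^m}\,\langle g\rangle^{v_{\vec{w}}}_Q\prod_i\langle|\tilde f_i|\rangle^{\sigma_i}_{30B(Q)}.$$
The geometric prefactor is at most $C_\omega\,v_{\vec{w}}(E_Q)^{1/p'}\prod_i\sigma_i(E_Q)^{1/p_i}$ by the second case of the definition of $C_\omega$; then H\"older's inequality with exponents $(p',p_1,\dots,p_m)$ (summing to $1$) splits the sum into $m+1$ factors, each of the form $\bigl(\sum_Q \nu(E_Q)(\langle h\rangle^{\nu}_Q)^{q}\bigr)^{1/q}$ for $\nu\in\{v_{\vec{w}},\sigma_i\}$ and $q\in\{p',p_i\}$. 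Using $\langle h\rangle^{\nu}_Q\leq M^{\nu}h(x)$ for $x\in E_Q$ and the disjointness of the $E_Q$, each factor is dominated by $\|M^{\nu}h\|_{L^q(\nu)}\lesssim\|h\|_{L^q(\nu)}$, giving $\|g\|_{L^{p'}(v_{\vec{w}})}\prod_i\|\tilde f_i\|_{L^{p_i}(\sigma_i)}=\prod_i\|f_i\|_{L^{p_i}(w_i)}$.

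For $1/m<p\leq 1$ duality fails, so I would apply the sub-additivity $(\sum_Q a_Q\chi_Q)^p\leq\sum_Q a_Q^p\chi_Q$ pointwise and then use a Kolmogorov-type localisation with $p_0:=\min_i p_i$: H\"older on $30B(Q)$ yields $\int|f_i|\leq\sigma_i(30B(Q))^{1-1/p_0}\bigl(\int|\tilde f_i|^{p_0}\sigma_i\bigr)^{1/p_0}$, so the exponent $p_0'$ in $C_\omega$ is precisely the H\"older conjugate of $p_0$. Summing using sparseness (with $\mu(Q)\leq\eta^{-1}\mu(E_Q)$ inserted before, not after, the H\"older step) and collecting terms reproduces the stated $C_\omega$ times the norm product. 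The intermediate regime $p_0'=\max_j p_j'$ is handled by a hybrid: dualize only in the slot attaining the maximum $p_j'$ (against an $L^{(p_j')'}$-function) and apply the Kolmogorov trick to the remaining slots; the uniform exponent $p_0'$ in both the numerator and the denominator of $C_\omega$ reflects this single-slot duality. The main obstacle is matching the sharp form of $C_\omega$ in these last two cases: a naive H\"older application produces a constant involving $\mu(Q)$ rather than $\mu(E_Q)$, so the sparse compensation must be inserted \emph{inside} the H\"older step for the $E_Q$-denominators to emerge correctly. A second subtlety is that $\mu$ is non-doubling, so $\mu(\alpha B(Q))$ and $\mu(30B(Q))$ cannot be interchanged freely, forcing the mixed $\alpha B(Q)$-and-$Q$-form of $C_\omega$ to be preserved throughout.
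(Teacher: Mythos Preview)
Your treatment of the range $p\ge\max_i p_i'$ is essentially the paper's argument: dualize against $g\in L^{p'}(v_{\vec w})$, pull out the geometric prefactor as the case-2 form of $C_\omega$, apply H\"older with exponents $(p',p_1,\dots,p_m)$, and close each factor with the $L^q(\nu)$-boundedness of the weighted maximal operator $M_\nu$ using the disjointness of the $E_Q$.

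The other two regimes, however, are handled differently in the paper, and your sketches there have gaps. For $\tfrac{1}{m}<p\le 1$ the paper does \emph{not} apply H\"older inside $\int_{30B(Q)}|f_i|$ with the uniform exponent $p_0$. After $p$-subadditivity it extracts the constant $K_Q$ (the case-1 $C_\omega$) immediately, uses sparseness to replace $\mu(Q)^{mp(p_0'-1)}$ by $\mu(E_Q)^{mp(p_0'-1)}$, and then applies H\"older \emph{on $E_Q$} via the pointwise identity $v_{\vec w}^{1/p}\prod_i\sigma_i^{1/p_i'}\equiv 1$ to convert $\mu(E_Q)^{mp(p_0'-1)}$ into $v_{\vec w}(E_Q)^{p_0'-1}\prod_i\sigma_i(E_Q)^{p(p_0'-1)/p_i'}$. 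After cancellation the remaining sum is $\sum_Q\prod_i\bigl(\langle|\tilde f_i|\rangle^{\sigma_i}_{\alpha B(Q)}\bigr)^{p}\sigma_i(E_Q)^{p/p_i}$, and H\"older with exponents $p_i/p$ together with $\|M_{\sigma_i}\|_{L^{p_i}(\sigma_i)\to L^{p_i}(\sigma_i)}\lesssim 1$ finishes. Your ``Kolmogorov'' step would instead produce local $L^{p_0}(\sigma_i)$-norms over the \emph{overlapping} sets $30B(Q)$, and these cannot be summed to $\|\tilde f_i\|_{L^{p_i}(\sigma_i)}$ using only disjointness of the $E_Q$; as written that branch does not close.

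For the remaining case $p_0'=\max_j p_j'$ the paper uses a cleaner device than your partial-duality-plus-Kolmogorov hybrid: it invokes the multilinear self-adjointness of $\mathcal A_{\mathcal S}$ to swap the slot $j_0$ achieving the maximum with the target, obtaining
\[
\|\mathcal A_{\mathcal S}\|_{\prod_i L^{p_i}(w_i)\to L^{p}(v_{\vec w})}
=\|\mathcal A_{\mathcal S}\|_{L^{p'}(v_{\vec w}^{\,1-p'})\times\prod_{i\ne j_0}L^{p_i}(w_i)\to L^{p_{j_0}'}(\sigma_{j_0})},
\]
together with a short lemma showing $[\vec w^{\,j_0}]_{A_{\vec P^{j_0}}}=[\vec w]_{A_{\vec P}}^{\,p_{j_0}'/p}$. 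Since the new target exponent $p_{j_0}'$ dominates all dual exponents of the new inputs, the already-established case 2 applies directly to the swapped problem. No separate ``hybrid'' estimate is carried out; the third expression for $C_\omega$ is simply what case 2 produces after this transposition.
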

\begin{proof}
We first consider the case $\frac{1}{2}< p\leq 1$. Let us assume $ p_1=\min\lbrace p_1,p_2\rbrace $ and $\vec{f}_{\sigma}$ denotes the tuple $(f_1 \sigma_1, f_2 \sigma_2)$. Now,
\begin{small} 
 \begin{align*}
\int_{X}\mathcal{A_S}(\vec{f}_{\sigma})^p v_{\vec{w}}d\mu 
&\leq \sum_{Q\in \mathcal{S}}\left(\prod_{i=1}^{2}\frac{1}{\mu(\alpha B(Q))}\int_{30B(Q)}|f_i|\sigma_{i}d\mu\right)^p v_{\vec{w}}(Q)\\
&\leq (\sup_{Q}K_{Q})\sum_{Q\in \mathcal{S}}\frac{\mu(Q)^{2p(p'_1-1)}}{v_{\vec{w}}(Q)^{p'_{1}-1}\prod\limits_{i=1}^{2}\sigma_{i}(\alpha B(Q))^{\frac{p p'_1}{p'_{i}}}}\left(\prod_{i=1}^{2}\int_{30B(Q)}|f_i|\sigma_{i}d\mu\right)^p,
 \end{align*}
\end{small}
where $K_{Q}=\frac{v_{\vec{w}}(Q)^{p'_{1}}\prod\limits_{i=1}^{2}\sigma_{i}(\alpha B(Q))^{\frac{p p'_{1}}{p'_{i}}}}{\mu(\alpha B(Q))^{2p}\mu(Q)^{2p(p'_{1}-1)}}$. Now using the sparseness, we observe that
\begin{small}
\begin{align*}
\mu(Q)^{2p(p'_{1}-1)}\lesssim \mu(E(Q))^{2p(p'_{1}-1)}\leq v_{\vec{w}}(Q)^{p'_{1}-1}\prod\limits_{i=1}^{2}\sigma_{i}(E_Q)^{\frac{p(p'_{1}-1)}{p'_{i}}}.\\
v_{\vec{w}}(Q)^{p'_{1}-1}\leq v_{\vec{w}}(Q)^{p'_{1}-1}~
{\text~and}~~\sigma_{i}(E_Q)^{\frac{p(p'_{1}-1)}{p'_{i}}-\frac{p}{p_i}}\leq \sigma_{i}(Q)^{\frac{pp'_{1}}{p'_{i}}-p}.
\end{align*}
\end{small}
As $\alpha\geq 200$, the Vitali covering lemma ensures the boundedness of the modified uncentered maximal function $\tilde{M}_{\sigma}f(x):=\sup_{B\ni x}\frac{1}{\sigma(\alpha B)}\int_{30B}|f|\sigma d\mu$ from $L^p(\sigma)$ to itself for $1<p\leq \infty$, where $\sigma(\alpha B)$ denotes $\int_{\alpha B} \sigma d\mu$. Using this fact together with the above estimates we get,
\begin{small}
\begin{align*}
\int_{X}\mathcal{A_S}(\vec{f}_{\sigma})^p v_{\vec{w}}d\mu 
&\leq  (\sup_{Q}K_{Q})\sum_{Q\in \mathcal{S}}\prod\limits_{i=1}^{2}\left(\frac{1}{\sigma_{i}(\alpha B(Q))}\int_{30B(Q)}|f_i|\sigma_{i}d\mu\right)^{p}\sigma_{i}(E_Q)^{\frac{p}{p_i}}\\
&\leq (\sup_{Q}K_{Q}) \prod\limits_{i=1}^{2}\Big\lbrace{\sum_{Q\in \mathcal{S}}\left(\frac{1}{\sigma_{i}(\alpha B(Q))}\int_{30B(Q)}|f_i|\sigma_{i} d\mu\right)^{p_i}\sigma_{i}(E_Q)\Big\rbrace}^{\frac{p}{p_i}}\\
&\leq (\sup_{Q}K_{Q})\prod\limits_{i=1}^{2}\left(\sum_{Q\in \mathcal{S}}\int_{E_Q}\tilde{M}_{\sigma_{i}d\mu}(f_i)^{p_i}\sigma_{i}d\mu\right)^{\frac{p}{p_i}}\\
&\leq (\sup_{Q}K_{Q})\prod\limits_{i=1}^{2}||\tilde{M}_{\sigma_{i}d\mu}(f_i)||_{L^{p_i}(\sigma_{i}d\mu)}^p\\
&\leq (\sup_{Q}K_{Q}) \prod\limits_{i=1}^{2}\|f_i\|_{L^{p_i}(\sigma_{i}d\mu)}^p.
\end{align*}
\end{small}
Hence, $\|\mathcal{A_S}(\vec{f})\|_{L^p(v_{\vec{w}})}\leq C_{w}\prod_{i=1}^{2}\|f_i\|_{L^{p_i}(w_i)}$,
where $C_{w}=\sup\limits_{Q}\frac{v_{\vec{w}}(Q)^{\frac{p'_{1}}{p}}\prod\limits_{i=1}^{2}\sigma_{i}(\alpha B(Q))^{\frac{p'_{1}}{p'_{i}}}}{\mu(\alpha B(Q))^{2}\mu(Q)^{2(p_{1}^{'}-1)}}$.

If the underlying measure is doubling then $\mu(\alpha B(Q))$ and $\mu(B(Q))$ are comparable, we observe,
\begin{small}
\begin{equation*}
 C_{w}\lesssim_{\alpha}  \frac{v_{\vec{w}}(\alpha B(Q))^{\frac{p'_1}{p}}\prod\limits_{i=1}^{2}\sigma_{i}(\alpha B(Q))^{\frac{p p'_{1}}{p'_{i}}}}{\mu(\alpha B(Q))^{2}\mu(\alpha B(Q))^{2(p'_{1}-1)}}\simeq [\vec{w}]_{A_{\vec{P}}(\mu)}^{\frac{p'_{1}}{p}}
\end{equation*}
\end{small}
Therefore, for doubling measure, we have 
$\|\mathcal{A_S}(\vec{f})\|_{L^p(v_{\vec{w}})} \lesssim [\vec{w}]_{A_{\vec{P}}(\mu)}^{\frac{p_{1}^{'}}{p}}\prod_{i=1}^{2}\|f_i\|_{L^{p_i}(w_i)}$.

For  $p>1$, we use the duality . Let  $g\in L^{p'}(v_{\vec{w}})$  be a non-negative function and $\vec{f}_{\sigma}=(f_1 \sigma_1, f_2 \sigma_2)$, 
\begin{small}
\begin{align*}
&\int_{X}\mathcal{A_S}(\vec{f}_{\sigma})g v_{\vec{w}} d\mu\\
&\leq \sum_{Q\in \mathcal{S}}\int_{Q} g v_{\vec{w}} d\mu \cdot \prod_{i=1}^2\frac{1}{\mu(\alpha B(Q))}\int_{30B(Q)}|f_i|\sigma_{i} d\mu\\
&=\sum_{Q\in \mathcal{S}}\frac{v_{\vec{w}}(Q)\prod\limits_{i=1}^{2}\sigma_{i}(200B(Q))}{\mu(\alpha B(Q))^2}\frac{1}{v_{\vec{w}}(Q)}\int_{Q}g v_{\vec{w}}d\mu\prod\limits_{i=1}^{2}\frac{1}{\sigma_{i}(200B(Q))}\int_{30B(Q)}|f_i|\sigma_{i}d\mu\\
& \lesssim \sup\limits_{Q}\frac{v_{\vec{w}}(Q)\prod\limits_{i=1}^{2}\sigma_{i}(200B(Q))}{\mu(\alpha B(Q))^2 {v_{\vec{w}}(E_Q)}^{\frac{1}{p^{'}}}\prod\limits_{i=1}^2{\sigma_i(E_Q)}^{\frac{1}{p_i}}}\left[\sum_{Q\in \mathcal{S}}(\frac{1}{v_{\vec{w}}(Q)}\int_{Q}g v_{\vec{w}}d\mu)^{p^{'}}v_{\vec{w}}(E_Q)\right]^{\frac{1}{p^{'}}}\\
&\prod\limits_{i=1}^2\left[\sum_{Q\in \mathcal{S}}(\frac{1}{\sigma_{i}(200B(Q))}\int_{30B(Q)}|f_i|\sigma_{i}d\mu)^{p_i}\sigma_i(E_Q)\right]^{\frac{1}{p_i }}.
\end{align*}
\end{small}
Denote $ L_Q=\frac{v_{\vec{w}}(Q)\prod\limits_{i=1}^{2}\sigma_{i}(200B(Q))}{\mu(\alpha B(Q))^2 {v_{\vec{w}}(E_Q)}^{\frac{1}{p'}}\prod\limits_{i=1}^2{\sigma_i(E_Q)}^{\frac{1}{p_i}}}$.
Hence we obtain
\begin{small}
\begin{align*}
\int_{X}\mathcal{A_S}(\vec{f}_{\sigma})g v_{\vec{w}} d\mu
&\lesssim (\sup\limits_{Q}L_Q) \|M_{v_{\vec{w}}d\mu}^{\mathscr{D}}g\|_{L^{p'}(v_{\vec{w}}d\mu)}\prod_{i=1}^{2}\|\tilde{M}_{\sigma_{i}d\mu}(f_i)\|_{L^{p_i}(\sigma_i)}\\
&\lesssim (\sup\limits_{Q}L_Q)\|g\|_{L^{p'}(v_{\vec{w}}d\mu)}\prod_{i=1}^2\|f_i\|_{L^{p_i}(\sigma_i)}.
\end{align*}
\end{small}
Thus using duality we get $\|\mathcal{A_S}(\vec{f})\|_{L^p(v_{\vec{w}})}\leq (\sup\limits_{Q}L_{Q})\prod_{i=1}^{2}\|f_i\|_{L^{p_i}(w_i)}$.
Now there are two cases.\\

\textbf{Case1}: $p\geq\max\limits_{i}p'_i$.\\
Now observe that, $$\mu(Q)^{2(p-1)}\lesssim \mu(E_Q)^{2(p-1)}\lesssim v_{\vec{w}}(E_Q)^{\frac{p-1}{p}}\prod_{i=1}^{2}\sigma_{i}(E_Q)^{\frac{p-1}{p'_i}},$$
 and as $E_Q\subset Q$, we have $\sigma_{i}(E_Q)^{\frac{p}{p'_i}}\leq\sigma_i(200B(Q))^{\frac{p}{p'_i}-1}\sigma_i(E_Q)$ for all $i=1,2$.

Using these facts we get $\sup\limits_{Q}L_{Q}\leq C_{w}$
where, $$C_{w}=\sup_{Q}\frac{v_{\vec{w}}(Q)\prod\limits_{i=1}^{2}\sigma_{i}(200 B(Q))^{\frac{p}{p'_i}}}{\mu(\alpha B(Q))^2 \mu(Q)^{2(p-1)}}.$$
Similar to the previous case if we restrict ourselves to doubling measures one can see that $C_{w}\simeq [\vec{w}]_{A_{\vec{P}}}$. Therefore, for the doubling measure we get
 $$\|\mathcal{A_S}(\vec{f}_{\sigma})\|_{L^p(v_{\vec{w}})}\lesssim [\vec{w}]_{A_{\vec{P}}(\mu)}\prod_{i=1}^{2}\|f_i\|_{L^{p_i}(\sigma_i)}.$$ 
 
\textbf{Case2}: Finally we consider the general case of exponents. Hence without loss of generality we may assume that $p'_1\geq \max\{p,p'_2\}$. Observe the following facts:
\begin{align*}
\mu(Q)^{2(p'_{1}-1)}\lesssim \mu(E_Q)^{2(p'_{1}-1)}\lesssim v_{\vec{w}}(E_Q)^{\frac{p'_{1}-1}{p}}\sigma_{1}(E_{Q})^{\frac{1}{p_1}}\sigma_{2}(E_Q)^{\frac{p'_{1}-1}{p_2^{'}}} \\ \text{and}\;\; \sigma_{2}(E_Q)^{\frac{p'_{1}}{p'_{2}}-1}\leq \sigma_{2}(200B(Q))^{\frac{p'_{1}}{p'_{2}}-1}.
\end{align*}
Using the above estimates it is easy to see that 
\begin{align*}
C_{w}=\sup_{Q}L_{Q}&\leq \sup_{Q} \frac{v_{\vec{w}}(Q)^{\frac{p'_{1}}{p}}\sigma_{1}(200B(Q))\sigma_{2}(200B(Q))^{\frac{p'_{1}}{p'_{2}}}}{\mu(\alpha B(Q))^2 \mu(Q)^{2(p'_{1}-1)}}\\
&\simeq [\vec{w}]_{A_{\vec{P}}}^{\frac{p'_1}{p}},
\end{align*}
if the underlying measure is doubling. This completes the proof of Proposition~\ref{sparseprop}.
\end{proof}
Combining Theorem~\ref{sparsethm} and Proposition~\ref{sparseprop} yields the  quantitative weighted estimates for bilinear Calder\'on-Zygmund operators on non-homogeneous setting and therefore concludes the proof of Corollary~\ref{c6mainresult}. 
\section{End-point estimates}
\label{endpointresults}
\subsection{Proof of Theorem~\ref{endpoint}}
In this subsection we prove Theorem~\ref{endpoint}. As mentioned earlier this result was proved on  $\mathbb{R}^n$ with polynomial growth measures in \cite{marti}. One of the difficulty in our case is to get the required estimates after decomposing the integrals over annular regions and to overcome that we shall need some extra lemmas. The Calder\'on-Zygmund decomposition in this setting was given by Tolsa for polynomial growth measures on $\mathbb{R}^n$ (see \cite{T3}) then in \cite{Bui} it was extended to non-homogeneous setting. For $\zeta,\tau>1$, we say a ball $B$ is $(\zeta,\tau)$ doubling if $\mu(\zeta B)\leq \tau \mu(B)$. We will strictly follow the Calder\'on-Zygmund decomposition from \cite{Bui} (Lemma~6.1). We also need the following lemma which helps us to use the size condition of the kernel appropriately.
\begin{lem}
\label{bglemma}
Let $(X,d,\mu)$ be a upper doubling, geometric doubling metric measure space. Let $x\in X,~\epsilon>0$. Then
$$\int_{d(x,z)>\epsilon}\frac{1}{\lambda(x,d(x,z))^2}d\mu(z)\lesssim \frac{1}{\lambda(x,\epsilon)}.$$
\end{lem}
\begin{proof} 
Denote $\epsilon_0=\epsilon$. Let $\epsilon_1$ be the smallest $2^k\epsilon$ such that $\lambda(x,2^k\epsilon_0)>2\lambda(x, \epsilon_0)$ where $k\in \mathbb{N}$. $\epsilon_2$ be the smallest $2^k\epsilon_1$ such that $\lambda(x,2^k\epsilon_1)>2\lambda(x, \epsilon_1)$ holds and so on. Let $k_i$ be the smallest corresponding to $\epsilon_i$, i.e, $\epsilon_{i+1}=2^{k_i}\epsilon_i$ and $\lambda(x, 2^{k_i}\epsilon_{i})>2\lambda(x, \epsilon_i)$, also $\lambda(x, 2^{k_{i}-1}\epsilon_{i})\leq 2\lambda(x, \epsilon_i)$. Thus collecting all this
\begin{align*}
\int_{d(x,z)>\epsilon}\frac{1}{\lambda(x,d(x,z))^2}d\mu(z)&\leq\sum_{i}\int_{B(x,\epsilon_{i+1})\setminus B(x, \epsilon_i)}\frac{1}{\lambda(x,d(x,z))^2}d\mu(z)\\&\lesssim \sum_{i}\frac{\mu(B(x,\epsilon_{i+1}))}{\lambda(x, \epsilon_{i+1})^2}\lesssim \sum_{i}\frac{1}{2^i\lambda(x,\epsilon)}\lesssim \frac{1}{\lambda(x,\epsilon)}.
\end{align*}
If for some $\epsilon_{i_0}$ there is no $k$ satisfying our algorithm, then $\lambda(x, 2^k\epsilon_{i_0})\leq 2\lambda(x,\epsilon_{i_0})$ for all $k\geq 1$. Then $\mu(X)=\lim_{k\rightarrow \infty}\mu(B(x, 2^k\epsilon_{i_0}))\leq 2\lambda(x,\epsilon_{i_0})<\infty$. Then
\begin{align*}
&\int_{d(x,z)>\epsilon}\frac{1}{\lambda(x,d(x,z))^2}d\mu(z)\\
&\leq\sum_{i\geq 0}^{i_{0}-2}\int_{B(x,\epsilon_{i+1})\setminus B(x, \epsilon_i)}\frac{1}{\lambda(x,d(x,z))^2}d\mu(z)+\int_{X\setminus B_{i_{0}-1}}\frac{1}{\lambda(x,d(x,z))^2}d\mu(z)\\
&\lesssim \sum_{i}^{i_{0}-2}\frac{1}{2^i\lambda(x,\epsilon)}+\frac{2}{\lambda(x,\epsilon)}\\
&\lesssim C_{\mu}\frac{1}{\lambda(x,\epsilon)}.
\end{align*}
\end{proof}
The following lemma is  from \cite{Bui}. One needs some minor correction in the proof given in \cite{Bui}.
\begin{lem}
Let $(X,d,\mu)$ be a upper doubling, geometric doubling space. Let $Q\subset R$ be concentric balls such that there are no $(\zeta, \tau)$- doubling balls (with $\tau>C_{\lambda}^{\log_2\zeta}$) of the form $\zeta^k Q, k\geq 0,$ with $Q\subset \zeta^k Q\subset R$, then
\begin{align*}
\int_{R\setminus Q}\frac{1}{\lambda(x, d(x, c_{Q}))}d\mu(x)\leq C,
\end{align*}
where $C$ depends only on $\zeta, \tau,$ and $\mu$.
\label{tolsa}
\end{lem}
The decomposition and idea of the proof are based on Proposition~5.2 in \cite{marti} except for certain estimates. For convenience, we are including a detailed sketch of the proof.\\

Before going into the proof let us collect some important ingredients which will be helpful. Let $f_1,f_2\in L^1(\mu)$ with $\|f_i\|_{L^1(X,d,\mu)}=1$ with $i=1,2$ and $t>0$. 

We perform Calder\'on-Zygmund decomposition (Lemma~6.1 in \cite{Bui}) to $f_1$ at height $t^\frac{1}{2}$ and get disjoint balls $\{Q_{1,i}\}$  and $R_{1,i}$ is the smallest $(3\times 6^2, C_{\lambda}^{\log_{2} 3\times 6^2+1})$-doubling ball of the form $\{(3\times 6^2)^k Q_{1,i}\}_{k\geq 1}$ satisfying the following properties. We write $f_1=g_1+\beta_1=g_1+\sum_{i}\beta_{1,i}$, where,

\begin{enumerate}[i)]
\setlength\itemsep{1.2em}
\item 
$\frac{1}{\mu(6^2Q_{1,i})}\int_{Q_{1,i}}|f_1|d\mu\gtrsim t^{\frac{1}{2}}$ and $|f_1(x)|\leq t^{\frac{1}{2}}$ for all $x\in X\setminus \cup_{i} 6Q_{1,i}$.
\item
$\frac{1}{\mu(6^2 \eta Q_{1,i})}\int_{\eta Q_{1, i}}|f_1|d\mu\lesssim t^{\frac{1}{2}}$ for all $\eta>1$.
\item
$g_1=f_1 \chi_{X\setminus \cup 6Q_{1,i}}+\sum\limits_{i}\phi_{1,i}$, $\beta_{1,i}=\omega_{1,i}f_1-\phi_{1,i}$ with $\beta_{1,i}(Q_{1,i})=0$ and $\|\beta_{1,i}\|_{L^1(X,d,\mu)}\lesssim |f_1|(Q_{1,i})$.
\item
$\|g_1\|_{L^\infty(X,d,\mu)}\lesssim t^{\frac{1}{2}}$, $\|g_1\|_{L^1(X,d,\mu)}\leq 1$ and $\|g_1\|^s_{L^s(X,d,\mu)}\lesssim t^{\frac{s-1}{2}}$ for $1<s<\infty$.
\item
$\supp(\phi_{1,i})\subset R_{1,i}$,~~ $\|\phi_{1,i}\|_{L^\infty(X,d,\mu)}\mu(R_{1,i})\lesssim |f_1|(Q_{1,i})$.
\end{enumerate}
Apply Calder\'on-Zygmund decomposition to $f_2$ also at the same height ($t^{\frac{1}{2}}$) and corresponding notations will be $g_2, Q_{2,j}, R_{2,j}, \beta_{2,j}, \phi_{2,j}, \omega_{2,j}$ etc. As a by product of \CZ~decomposition, we also have the following lemma.
\begin{lem}
\label{revlem}
Let $f_i$ be as above and $\{Q_{i,j}\}_{j}$ are cubes associated to the \CZ~decomposition of $f_i$ at height $t^{\frac{1}{2}}$ for $i=1,2$. Then for $i=1,2$,
$$\sup\limits_{r>0}\frac{|f_i|(B(x,r))}{\lambda(x,r)}\leq \kappa~ t^{\frac{1}{2}},$$
~for all $x\in X\setminus \bigcup_{j} 6^2Q_{i,j}$. The constant $\kappa$ only depends on $C_{\lambda}$ and the doubling dimension of $X$.
\end{lem}
\begin{proof} 
We only prove it for $i=2$. If $B(x,r)\bigcap(\bigcup 6 Q_{2,j})=\emptyset $. Then by \CZ~decomposition (as $|f_2|\leq t^{\frac{1}{2}}$ on $(\bigcup 6 Q_{2,j})^c$) we get,
$$|f_2|(B(x,r))\leq t^{\frac{1}{2}}\mu(B(x,r))\leq C_{\lambda} t^{\frac{1}{2}} \lambda(x,r).$$
If $B(x,r)\cap 6 Q_{2,j}\neq \emptyset$, then as $x\in X\setminus \bigcup 6^2Q_{2,j}$, we have $r>28r(Q_{2,j})$. Thus $B(x,r)\subset B(c_{Q_{2,j}}, Cr)$ and this implies together with \CZ~decomposition
$$|f_2|(B(x,r))\leq |f_2|(B(c_{Q_{2,j}}, Cr))\lesssim t^{\frac{1}{2}}\mu (B(c_{Q_{2,j}}, 6^2 Cr))\leq \kappa t^{\frac{1}{2}} \lambda(x,r),$$
where we have used $\lambda(x,Cr)\simeq \lambda(c_{Q_{2,j}}, Cr)$ as $x\in B(c_{Q_{2,j}}, Cr)$.
\end{proof}
Let us start with the proof of Theorem~\ref{endpoint}.\\

\noindent\textbf{Proof of Theorem~\ref{endpoint}:}
\begin{proof}
We will prove that 
\begin{align}
\mu(\{x\in X: |T_\epsilon(f_1,f_2)(x)|>t\})\lesssim \frac{1}{t^{\frac{1}{2}}}
\label{c6weakmain}
\end{align}
with constant independent of $\epsilon>0$.
\begin{align*}
&\mu(\{x\in X: |T_\epsilon(f_1,f_2)(x)|>t\})\\
&\leq \mu(\bigcup_{i} 6^2Q_{1,i})+\mu(\bigcup_{j}6^{2}Q_{2,j})+\mu(I_{GG})+\mu(I_{BG})+\mu(I_{GB})+\mu(I_{BB}),
\end{align*}
where
\begin{small}
\begin{align*}
&I_{GG}:=\{x\in X:|T_\epsilon(g_1,g_2)(x)|>\frac{t}{4}\},\\
&I_{BG}:=\{x\in X\setminus \bigcup\limits_{i}6^2Q_{1,i}: |T_\epsilon(\beta_1,g_2)(x)|>\frac{t}{4}\},\\
& I_{GB}:=\{x\in X\setminus \bigcup\limits_{j}6^2Q_{2,j}: |T_\epsilon(g_1,\beta_2)(x)|>\frac{t}{4}\},\\
& I_{BB}:=\{x\in X\setminus (\bigcup\limits_{i}6^2Q_{1,i}\cup\bigcup\limits_{j}6^2Q_{2,j}): |T_\epsilon(\beta_1,\beta_2)(x)|>\frac{t}{4}\}.
\end{align*} 
\end{small}
By the decomposition 
\begin{align*}
\mu(\bigcup_{i} 6^2Q_{1,i})\lesssim \frac{1}{t^{\frac{1}{2}}}\sum_{i}\int_{Q_{1,i}}|f_1|\lesssim \frac{1}{t^{\frac{1}{2}}}.
\end{align*}
Similarly $\mu(\bigcup_{j} 6^{2}Q_{2,j})\lesssim \frac{1}{t^{\frac{1}{2}}}$.\\
\textbf{Good-Good:}~Using the boundedness of $T$ we get
\begin{align}
\label{c6gg}
\mu(I_{GG})&\lesssim \frac{1}{t^{p}}\|g_1\|_{L^{p_1}(X,d,\mu)}^p\|g_2\|_{L^{p_2}(X,d,\mu)}^p\lesssim \frac{1}{t^{p}} t^{\frac{p}{2}(\frac{p_{1}-1}{p_1}+\frac{p_{2}-1}{p_{2}})}\lesssim \frac{1}{t^{p}} t^{p-\frac{1}{2}}\lesssim \frac{1}{t^{\frac{1}{2}}}.
\end{align}
\textbf{Bad-Good:} We want to estimate $I_{BG}:=\{x\in X\setminus \bigcup\limits_{i}6^2Q_{1,i}: |T_{\epsilon}(\beta_1,g_2)(x)|>\frac{t}{4}\}$. Observe 
\begin{align}
\nonumber\mu(I_{BG})\lesssim \frac{1}{t}\sum\limits_{i}\int_{X\setminus 2R_{1,i}}|T_{\epsilon}(\beta_{1,i},g_2)(x)|d\mu(x)+\frac{1}{t}\sum\limits_{i}\int_{2R_{1,i}\setminus 6^2 Q_{1,i}}|T_{\epsilon}(\beta_{1,i},g_2)(x)|d\mu(x)
\label{bg1}
\end{align}
Consider the first term. Let $x\notin (2R_{1,i})$ be such that $dist(x, R_{1,i})>\epsilon$, then $\supp(\beta_{1,i})\subset R_{1,i}$. Therefore using the cancellation in the first component we get
\begin{align*}
&\nonumber|T_{\epsilon}(\beta_{1,i},g_2)(x)|\\
\nonumber &=|\int_{d(x,z)>\epsilon}\int_{R_{1,i}}(K(x,y,z)-K(x,y',z))\beta_{1,i}(y)d\mu(y)g_2(z)d\mu(z)|\\
\nonumber &\lesssim \int_{d(x,z)>\epsilon}\int_{R_{1,i}}\omega\left(\frac{d(y,c_{Q_{1,i}})}{(d(x,y)+d(x,z))}\right)\frac{|\beta_{1,i}(y)|d\mu(y)||g_2|d\mu(z)}{(\lambda(x,d(x,y))+\lambda(x,d(x,z)))^2}\\
\nonumber &\lesssim t^{\frac{1}{2}}||\beta_{1,i}||_{1}\omega(\frac{r(R_{1,i})}{d(x,c_{R_{1,i}})})\int_{d(x,z)>\epsilon}\frac{1}{(\lambda(x,d(x,{R_{1,i}}))+\lambda(x,d(x,z)))^2}d\mu(z)\\
\nonumber &\lesssim t^{\frac{1}{2}}||\beta_{1,i}||_{1}\omega(\frac{r(R_{1,i})}{d(x,c_{R_{1,i}})})\left(\int_{\epsilon<d(x,z)\leq d(x, {R_{1,i}})}\frac{1}{\lambda(x,d(x,{R_{1,i}}))^2}d\mu(z)\right)\\
&+ t^{\frac{1}{2}}||\beta_{1,i}||_{1}\omega(\frac{r(R_{1,i})}{d(x,c_{R_{1,i}})})\left(\int_{d(x,z)> d(x, {R_{1,i}})}\frac{1}{\lambda(x,d(x,z))^2}d\mu(z)\right)\\
\nonumber &\lesssim t^{\frac{1}{2}}||\beta_{1,i}||_{1}\omega(\frac{r(R_{1,i})}{d(x,c_{R_{1,i}})})
\frac{1}{\lambda(x,d(x,{R_{1,i}}))}~~~(by~Lemma~\ref{bglemma}.)\\
\end{align*}
Thus
\begin{equation}
|T_{\epsilon}(\beta_{1,i},g_2)(x)|\lesssim t^{\frac{1}{2}}|f_1|({Q_{1,i}})\omega(\frac{r(R_{1,i})}{d(x,c_{R_{1,i}})})
\frac{1}{\lambda(x,d(x,{R_{1,i}}))}.
\label{revised1}
\end{equation}
Now using \eqref{revised1} and standard annular decomposition, we get
\begin{align} 
\int_{X\setminus 2R_{1,i},~dist(x,R_{1,i})>\epsilon}|T_{\epsilon}(\beta_{1,i},g_2)(x)|d\mu(x)
\lesssim C_{\|\omega\|_{Dini(1)}}t^{\frac{1}{2}}|f_1|(Q_{1,i}).
\label{bg3}
\end{align}
Let $x\notin 2R_{1,i}$ with $dist(x, R_{1,i})\leq \epsilon$. Then $d(x,c_{R_{1,i}})\leq C\epsilon$ and using Lemma~\ref{bglemma} and the size-estimate of $K$ we get
\begin{align}
\nonumber |T_{\epsilon}(\beta_{1,i},g_2)(x)|\lesssim t^{\frac{1}{2}}|f_1|(Q_{1,i})\frac{1}{\lambda(x,\epsilon)}.
\end{align}
This implies together with the fact that $\lambda(x,r)\simeq \lambda(y,r)$ if $d(x,y)\leq r$,
\begin{align}
\int_{X\setminus 2R_{1,i},~dist(x,R_{1,i})\leq\epsilon}|T_{\epsilon}(\beta_{1,i},g_2)(x)|d\mu(x)
%\nonumber &\lesssim t^{\frac{1}{2}}|f_1|(Q_{1,i})\int_{B(c_{R_{1,i}},C\epsilon)}\frac{1}{\lambda(x,\epsilon)}d\mu(x)\\
%&\nonumber\lesssim t^{\frac{1}{2}}|f_1|(Q_{1,i})\int_{B(c_{R_{1,i}},C\epsilon)}\frac{1}{\lambda(c_{R_{1,i}},C\epsilon)}d\mu(x)\\
&\lesssim t^{\frac{1}{2}}|f_1|(Q_{1,i}).
\label{bg4}
\end{align}
Thus \eqref{bg3} and \eqref{bg4} implies 
\begin{align}
\int_{X\setminus 2R_{1,i}}|T_{\epsilon}(\beta_{1,i},g_2)(x)|d\mu(x)\lesssim t^{\frac{1}{2}}|f_1|(Q_{1,i}).
\label{bg5}
\end{align}
This completes the first term. For the second one, 
\begin{align*}&\int_{2R_{1,i}\setminus 6^2 Q_{1,i}}|T_{\epsilon}(\beta_{1,i},g_2)(x)|d\mu(x)\\
&\leq \int_{2R_{1,i}\setminus 6^2 Q_{1,i}}|T_{\epsilon}(w_{1,i}f_1,g_2)(x)|d\mu(x)+\int_{2R_{1,i}}|T_{\epsilon}(\phi_{1,i},g_2)(x)|d\mu(x)\\
&=:T_1+T_2.
\end{align*}

\noindent\textbf{Estimate for $T_1$:} We use the size estimate and the fact that $d(x,y)\simeq d(x, c_{Q_{1,i}})+r(R_{1,i})$ for $x\notin 6^2Q_{1,i}, y\in 6^2Q_{1,i}$. Subsequently applying Lemma~\ref{bglemma} in the first term, we get:
\begin{align*}
&|T_{\epsilon}(w_{1,i}f_1,g_2)(x)|\\
&\leq t^{\frac{1}{2}}|f_1|(Q_{1,i})\textstyle{\left(\int_{d(x,z)>d(x,c_{Q_{1,i}})}+\int_{\epsilon<d(x,z)<d(x,c_{Q_{1,i}})}\right)\frac{d\mu(z)}{(\lambda(x,d(x,c_{Q_{1,i}}))+\lambda(x,d(x,z)))^2}}\\
&\lesssim t^{\frac{1}{2}}|f_1|(Q_{1,i}) \frac{1}{\lambda(x, d(x,c_{Q_{1,i}}))}
\end{align*}

By Lemma~\ref{tolsa}, we get the following:
\begin{align}
T_1 \lesssim t^{\frac{1}{2}}|f_1|(Q_{1,i}).
\label{bg6}
\end{align}
\textbf{Estimate for $T_2$:}
\begin{align*}
T_2 &:=\int_{2R_{1,i}}|T_{\epsilon}(\phi_{1,i},g_2)(x)|d\mu(x)\\
&\leq \int_{2R_{1,i}}|T_{\epsilon}(\phi_{1,i},\chi_{4R_{1,i}}g_2)(x)|d\mu(x)+\int_{2R_{1,i}}|T_{\epsilon}(\phi_{1,i},\chi_{(4R_{1,i})^c}g_2)(x)|d\mu(x)\\
&=:S_a+S_b
\end{align*}
For the first term we use the doubling condition on $R_{1,i}$ and boundedness of $T_{\epsilon}$ to conclude the following:
\begin{align}
\nonumber S_a &\lesssim \mu(R_{1,i})^{1-\frac{1}{p}}\|\phi_{1,i}\|_{L^\infty(X,d,\mu)}\mu(R_{1,i})^{\frac{1}{p_1}}t^{\frac{1}{2}}\mu(R_{1,i})^{\frac{1}{p_2}}\\
%\lesssim t^{\frac{1}{2}}(\mu(R_{1,i})\|\phi_{1,i}\|_{L^\infty(X,d,\mu)}\\
&\lesssim t^{\frac{1}{2}}|f_1|(Q_{1,i})
\label{bg7}
\end{align}
For $S_b$, use Lemma~\ref{bglemma} and observe
\begin{align*}
|T_\epsilon(\phi_{1,i},\chi_{(4R_{1,i})^c}g_{2})(x)|
&\leq \int_{(4R_{1,i})^c}\int_{R_{1,i}}\frac{|\phi_{1,i}(y)|d\mu(y)|g_2(z)|d\mu(z)}{(\lambda(x,d(x,y))+\lambda(x,d(x,z)))^2}\\
%&\lesssim \|\phi_{1,i}\|_{L^\infty(X,d,\mu)}t^{\frac{1}{2}}\mu(R_{1,i})\int_{(4R_{1,i})^c}\frac{d\mu(z)}{\lambda(x,d(x,z))^2}\\
%&\lesssim \|\phi_{1,i}\|_{L^\infty(X,d,\mu)}t^{\frac{1}{2}}\mu(R_{1,i})\int_{d(x,z)>Cr(R_{1,i})}\frac{d\mu(z)}{\lambda(x,d(x,z))^2}\\
&\lesssim \|\phi_{1,i}\|_{L^\infty(X,d,\mu)}t^{\frac{1}{2}}\mu(R_{1,i})\frac{1}{\lambda(x, Cr(R_{1,i}))}.
\end{align*}
Now
\begin{align}
\nonumber S_b &\lesssim \|\phi_{1,i}\|_{L^\infty(X,d,\mu)}t^{\frac{1}{2}}\mu(R_{1,i})\int_{2R_{1,i}}\frac{1}{\lambda(x, Cr(R_{1,i}))}d\mu(x)\\
%\nonumber &\lesssim \|\phi_{1,i}\|_{L^\infty(X,d,\mu)}t^{\frac{1}{2}}\mu(R_{1,i})\\
&\lesssim t^{\frac{1}{2}}|f_1|(Q_{1,i}).
\label{bg8}
\end{align}
Thus from \eqref{bg7} and \eqref{bg8}, we get  $T_2\lesssim t^{\frac{1}{2}}|f_1|(Q_{1,i})$. Combining this with \eqref{bg5} and \eqref{bg6}, we get
\begin{align}
\label{BG}
\mu(I_{BG})\lesssim \frac{1}{t}t^{\frac{1}{2}}\sum_{i}|f_1|(Q_{1,i})\lesssim \frac{1}{t^{\frac{1}{2}}}.
\end{align}
The estimate for $I_{GB}$ follows exactly from the arguments as in $I_{BG}$. Thus we skip it.\\
\textbf{Bad-Bad:} $BB:=\{x\in X\setminus \mathcal{B}: |T_\epsilon(\beta_1,\beta_2)(x)|>\frac{t}{4}\}$ where $\mathcal{B}=\bigcup\limits_{i}6^2Q_{1,i}\cup\bigcup\limits_{j}6^2 Q_{2,j}$.
$$T_\epsilon(\beta_1,\beta_2)=\sum\limits_{i}T_\epsilon\left(\beta_{1,i}, \sum\limits_{j:r(R_{1,i})\leq r(R_{2,j})}\beta_{2,j}\right)+\sum_{j}T_\epsilon\left(\sum\limits_{i:r(R_{1,i})>r(R_{2,j}))}\beta_{1,i}, \beta_{2,j}\right).$$
The above two terms are symmetric and thus we only treat the first. Denote $\mathcal{T}_i:=\{j: r(R_{1,i})\leq r(R_{2,j})\}$. 
\begin{small}
\begin{align}
\nonumber &\mu(\{x\in X\setminus\mathcal{B}:\sum\limits_{i}|T_\epsilon(\beta_{1,i}, \sum\limits_{j\in \mathcal{T}_i}\beta_{2,j})(x)|>\frac{t}{8}\})\\
\nonumber &\leq \mu(\{x\in X\setminus\mathcal{B}:\sum\limits_{i}\chi_{(2R_{1,i})^c}|T_\epsilon(\beta_{1,i}, \sum\limits_{j\in \mathcal{T}_i}\beta_{2,j})(x)|>\frac{t}{16}\})\\
\nonumber &+\mu(\{x\in X\setminus\mathcal{B}:\sum\limits_{i}\chi_{2R_{1,i}}|T_\epsilon(\beta_{1,i}, \sum\limits_{j\in \mathcal{T}_i}\beta_{2,j})(x)|>\frac{t}{16}\})\\
&=:\mathcal{S}_1+\mathcal{S}_2.
\label{bb1}
\end{align}
\end{small}
We only give the estimate for $\mathcal{S}_1$. Estimates for $\mathcal{S}_2$ follows from a similar line of arguments and incorporating the ideas from \cite{DLi,marti}.\vspace{1em}

\noindent\textbf{Estimate for $\mathcal{S}_1$:}\\
$$\mathcal{S}_1 \lesssim I_{a1}+I_{a2}+I_{a3}+I_{a4}+I_b,$$
where,
\begin{align}
\nonumber & I_{a1}:=\frac{1}{t}\int_{X\setminus\mathcal{B}}\left(\sum_{i,j}\chi_{(2R_{1,i})^c}\chi_{(2R_{2,j})^c}\chi_{\min\{d(x, R_{1,i}),d(x, R_{2,j})\}>\epsilon}|T_\epsilon(\beta_{1,i},\beta_{2,j})(x)|\right)d\mu(x),\\
\nonumber & I_{a2}:=\frac{1}{t^{\frac{1}{2}}}\int_{X\setminus\mathcal{B}}\left(\sum_{i,j}\chi_{(2R_{1,i})^c}\chi_{(2R_{2,j})^c}\chi_{\{d(x, R_{1,i})>\epsilon,~d(x, R_{2,j})\leq\epsilon\}}|T_\epsilon(\beta_{1,i},\beta_{2,j})(x)|\right)^{\frac{1}{2}}d\mu(x),\\
\nonumber & I_{a3}:=\frac{1}{t^{\frac{1}{2}}}\int_{X\setminus\mathcal{B}}\left(\sum_{i,j}\chi_{(2R_{1,i})^c}\chi_{(2R_{2,j})^c}\chi_{\{d(x, R_{2,j})>\epsilon,~d(x, R_{1,i})\leq\epsilon\}}|T_\epsilon(\beta_{1,i},\beta_{2,j})(x)|\right)^{\frac{1}{2}}d\mu(x),\\
\nonumber & I_{a4}:=\frac{1}{t^{\frac{1}{2}}}\int_{X\setminus\mathcal{B}}\left(\sum_{i,j}\chi_{(2R_{1,i})^c}\chi_{(2R_{2,j})^c}\chi_{\{\max\{d(x, R_{1,i}),d(x, R_{2,j})\}\leq\epsilon\}}|T_\epsilon(\beta_{1,i},\beta_{2,j})(x)|\right)^{\frac{1}{2}}d\mu(x),\\
\nonumber & I_{b}:=\frac{1}{t^{\frac{1}{2}}}\int_{X\setminus\mathcal{B}}\left(\sum_{i,j}\chi_{(2R_{1,i})^c}\chi_{(2R_{2,j})}|T_\epsilon(\beta_{1,i},\beta_{2,j})(x)|\right)^{\frac{1}{2}}d\mu(x).
\end{align}

\noindent\textbf{Estimate for $I_{a1}$:} \\
Using cancellation at the first component (for $\beta_{1,i}$), $|T_{\epsilon}(\beta_{1,i}, \beta_{2,j})(x)|$ is dominated by
\begin{align*}
&||\beta_{1,i}||_{1}||\beta_{2,j}||_{1}\omega\left(\frac{r(R_{1,i})}{(d(x,c_{R_{1,i}})+d(x,c_{R_{2,j}}))}\right)\frac{1}{(\lambda(x,d(x,c_{R_{1,i}}))+\lambda(x,d(x,c_{R_{2,j}})))^2}\\
\lesssim &||\beta_{1,i}||_{1}\int \chi_{Q_{2,j}}\omega\left(\frac{r(R_{1,i})}{(d(x,c_{R_{1,i}})+d(x,c_{R_{2,j}}))}\right)\frac{|f_2(z)|d\mu(z)}{(\lambda(x,d(x,c_{R_{1,i}}))+\lambda(x,d(x,z)))^2}.
\end{align*}
Now we sum over all $j$ and as $\sum_{j}\chi_{Q_{2,j}}\leq C$, the sum is dominated by the following
\begin{align*}
||\beta_{1,i}||_{1}\int_{d(x,z)>\epsilon}\omega\left(\frac{r(R_{1,i})}{(d(x,c_{R_{1,i}})+d(x,c_{R_{2,j}}))}\right)\frac{|f_2(z)|d\mu(z)}{(\lambda(x,d(x,c_{R_{1,i}}))+\lambda(x,d(x,z)))^2}
\end{align*}
Now we split the last quantity as following:
\begin{align*}
& ||\beta_{1,i}||_{1}\int_{\epsilon<d(x,z)\leq d(x, c_{R_{1,i}})}\omega\left(\frac{r(R_{1,i})}{(d(x,c_{R_{1,i}})+d(x,c_{R_{2,j}}))}\right)\frac{|f_2(z)|d\mu(z)}{(\lambda(x,d(x,c_{R_{1,i}}))+\lambda(x,d(x,z)))^2}\\
& +||\beta_{1,i}||_{1}\int_{d(x,z)>d(x, c_{R_{1,i}})}\omega\left(\frac{r(R_{1,i})}{(d(x,c_{R_{1,i}})+d(x,c_{R_{2,j}}))}\right)\frac{|f_2(z)|d\mu(z)}{(\lambda(x,d(x,c_{R_{1,i}}))+\lambda(x,d(x,z)))^2}.
\end{align*}
Now using Lemma~\ref{revlem}, the first term is simply dominated by $$\kappa||\beta_{1,i}||_{1}\omega\left(\frac{r(R_{1,i})}{d(x,c_{R_{1,i}})}\right)\frac{t^{\frac{1}{2}}}{\lambda(x,d(x,c_{R_{1,i}}))}.$$
Now for the second term construct a sequence of radii as in the Lemma~\ref{bglemma} and call them $\{r_l\}$. Denote $r_0=d(x, c_{R_{1,i}})$ and let $k_l$ be the smallest positive integer corresponding to $r_l$, i.e, $r_{l+1}=2^{k_l}r_l$ such that $\lambda(x, 2^{k_l}r_{l})>2\lambda(x, r_l)$, and $\lambda(x, 2^{k_{l}-1}r_{l})\leq 2\lambda(x, r_l)$, then the integral is dominated by
\begin{align*}
&||\beta_{1,i}||_{1}\omega(\frac{r(R_{1,i})}{d(x, c_{R_{1,i}})})\sum_{l}\int_{B(x,r_{l+1})\setminus B(x, r_l)}\frac{|f_2(z)|}{\lambda(x,d(x,z))^2}d\mu(z)\\
& \leq ||\beta_{1,i}||_{1}\omega(\frac{r(R_{1,i})}{d(x, c_{R_{1,i}})})\sum_{l}\frac{|f_2|(B(x, r_{l+1}))}{\lambda(x,r_{l+1})^2}\\
&\lesssim \kappa||\beta_{1,i}||_{1}\omega(\frac{r(R_{1,i})}{d(x, c_{R_{1,i}})})\sum_{l}\frac{t^{\frac{1}{2}}\lambda(x, r_{l+1})}{\lambda(x,r_{l+1})^2}~~(by~Lemma~\ref{revlem})\\
&\lesssim ||\beta_{1,i}||_{1}\omega(\frac{r(R_{1,i})}{d(x, c_{R_{1,i}})})\frac{t^{\frac{1}{2}}}{\lambda(x, r_0)}\sum_{l}\frac{1}{2^l}
\lesssim ||\beta_{1,i}||_{1}\omega\left(\frac{r(R_{1,i})}{d(x,c_{R_{1,i}})}\right)\frac{t^{\frac{1}{2}}}{\lambda(x,d(x,c_{R_{1,i}}))}.
\end{align*}
Now first integrating over $X\setminus 2R_{1,i}$ and then summing over $i$, gives that
\begin{equation}
\label{revisedbb1}
I_{a1}\lesssim \frac{\|\omega\|_{Dini(1)}}{t^{\frac{1}{2}}}.
\end{equation}

\noindent\textbf{Estimate for $I_{a2}$:} \\
For $d(., R_{1,i})>\epsilon$, $R_{1,i}\subset B(x,\epsilon)^c$, we can use the cancellation in the first component to obtain the following:
\begin{align*}
|T_\epsilon(\beta_{1,i}, \beta_{2,j})(x)|
%&\leq \big|\int\int_{R_{1,i}}\{K(x,y,z)-K(x,c_{R_{1,i}},z)\}\beta_{1,i}(y)\beta_{2,j}(z)d\mu(y)d\mu(z)\big|\\
%&\leq \int\int_{R_{1,i}}\omega\left(\frac{d(y,c_{R_{1,i}})}{d(x,y)}\right)\frac{1}{ \lambda(x,d(x,y))^2}|\beta_{1,i}(y)||\beta_{2,j}(z)|d\mu(y)d\mu(z)\\
\lesssim \omega\left(\frac{r(R_{1,i})}{d(x,c_{R_{1,i}})}\right)\frac{|f_1|(Q_{1,i})}{\lambda(x,d(x,c_{R_{1,i}}))}\frac{|f_2|(Q_{2,j})}{\lambda(x,\epsilon)}
\end{align*}
As $d(x, R_{2,j})\leq \epsilon$, we have $d(x,c_{R_{2,j}})\leq C\epsilon$. Thus using the above estimate with H\"older's inequality and subsequently using the annular decomposition we get
\begin{align}
\nonumber I_{a2}
&\lesssim \frac{1}{t^{\frac{1}{2}}} \left(\sum_{i}\sum_{k}\int_{2^{k+1}r(R_{1,i}) \geq d(x,c_{R_{1,i}})>2^k r(R_{1,i})}\omega\left(\frac{r(R_{1,i})}{d(x,c_{R_{1,i}})}\right)\frac{|f_1|(Q_{1,i})}{\lambda(x,d(x,c_{R_{1,i}}))}\right)^{\frac{1}{2}}\\
\nonumber &\left(\sum_{j}\int_{B(c_{R_{2,j}}, C\epsilon)}\frac{|f_2|(Q_{2,j})}{\lambda(x,\epsilon)}d\mu(x)\right)^{\frac{1}{2}}\\
&\lesssim \frac{\|\omega\|^{\frac{1}{2}}_{Dini(1)}}{t^{\frac{1}{2}}}.
\label{bb5}
\end{align}

\noindent\textbf{Estimate for $I_{a3}$:} Similar to $I_{a2}$.\vspace{1em}

\noindent\textbf{Estimate for $I_{a4}$:}\\
When $x\in (2R_{1,i})^c\cap(2R_{2,j})^c $ with $d(x, R_{1,i})\leq \epsilon, d(x, R_{2,j})\leq\epsilon$. We have,
$$|T_\epsilon(\beta_{1,i},\beta_{2,j})(x)|\lesssim \frac{|f_1|(Q_{1,i})}{\lambda(x,\epsilon)}\frac{|f_2|(Q_{2,j})}{\lambda(x,\epsilon)}.$$
This implies 
\begin{align}
\nonumber I_{a4} &\lesssim \frac{1}{t^{\frac{1}{2}}}\left(\sum_{i}\int_{B(c_{R_{1,i}}, C\epsilon)}\frac{|f_1|(Q_{1,i})}{\lambda(x,\epsilon)}d\mu(x)\right)^{\frac{1}{2}}\left(\sum_{j}\int_{B(c_{R_{2,j}}, C\epsilon)}\frac{|f_2|(Q_{2,j})}{\lambda(x,\epsilon)}d\mu(x)\right)^{\frac{1}{2}}\\
&\lesssim \frac{1}{t^{\frac{1}{2}}}\left(\sum_{i}|f_1|(Q_{1,i})\right)^{\frac{1}{2}}\left(\sum_{j}|f_2|(Q_{2,j})\right)^{\frac{1}{2}}\lesssim \frac{1}{t^{\frac{1}{2}}}.
\label{bb7}
\end{align}

\noindent\textbf{Estimate for $I_b$:}\\
\begin{align}
\nonumber I_b \nonumber &\leq \frac{1}{t^{\frac{1}{2}}}\int_{X\setminus\mathcal{B}}\left(\sum_{i,j}\chi_{(2R_{1,i})^c}\chi_{(2R_{2,j})}|T_\epsilon(\beta_{1,i},\phi_{2,j})(x)|\right)^{\frac{1}{2}}d\mu(x)\\
\nonumber &+\frac{1}{t^{\frac{1}{2}}}\int_{X\setminus\mathcal{B}}\left(\sum_{i,j}\chi_{(2R_{1,i})^c}\chi_{(2R_{2,j})\setminus6^2Q_{2,j}}|T_\epsilon(\beta_{1,i},w_{2,j}f_2)(x)|\right)^{\frac{1}{2}}d\mu(x)\\
&:=I_b'+I_b''.
\label{bb9}
\end{align}
\textbf{Estimate for $I_b'$:}\\
\textbf{Case~1:}
For $I_b'$, consider $x\in (2R_{1,i})^c\cap 2R_{2,j}$ such that $d(x, R_{1,i})>\epsilon$. Then using the cancellation in the first component, similar to \eqref{revised1}, we get
$$|T_\epsilon(\beta_{1,i},\phi_{2,j})(x))|\lesssim |f_1|(Q_{1,i})\|\phi_{2,j}\|_{L^\infty(X,d,\mu)}\frac{1}{\lambda(x,d(x, R_{1,i}))}\omega\left(\frac{r(R_{1,i})}{d(x,c_{R_{1,i}})}\right).$$
Now using H\"older's inequality and the above estimate we get,
\begin{align}
\nonumber &\frac{1}{t^{\frac{1}{2}}}\int_{X\setminus\mathcal{B}}\left(\sum_{i,j}\chi_{(2R_{1,i})^c}\chi_{d(.,R_{1,i})>\epsilon}\chi_{(2R_{2,j})}|T_\epsilon(\beta_{1,i},\phi_{2,j})(x)|\right)^{\frac{1}{2}}d\mu(x)\\
\nonumber &\lesssim \frac{1}{t^{\frac{1}{2}}}\left(\sum_{i}|f_1|(Q_{1,i})\sum_{k}\int\limits_{2^kr(R_{1,i})\leq d(x, c_{R_{1,i}})<2^{k+1}r(R_{1,i})}\frac{1}{\lambda(x,d(x, R_{1,i}))}\omega\left(\frac{r(R_{1,i})}{d(x,c_{R_{1,i}})}\right)\right)^{\frac{1}{2}}\\
 \nonumber & \times \left(\sum_{j}\|\phi_{2,j}\|_{L^\infty(X,d,\mu)}\mu(R_{2,j})\right)^{\frac{1}{2}}\lesssim \left(\sum_{i}|f_1|(Q_{1,i})\sum_{k}\omega(2^{-k})\right)^\frac{1}{2}\left(\sum_{j}|f_2|(Q_{2,j})\right)^{\frac{1}{2}}\\%\lesssim \|f_1\|_{L^1(\mu)}^\frac{1}{2}\|f_2\|_{L^1(\mu)}^\frac{1}{2}\\
&\lesssim \frac{\|\omega\|^{\frac{1}{2}}_{Dini(1)}}{t^{\frac{1}{2}}}.
\label{bb10}
\end{align}

\textbf{Case~2:}
Consider $x\in (2R_{1,i})^c\cap 2R_{2,j}$ with $d(x, R_{1,i})\leq \epsilon$. Then the size condition with Lemma~\ref{bglemma} gives the following estimate:
\begin{align*}
&|T_\epsilon(\beta_{1,i},\phi_{2,j})(x)|\\
%&\leq \int\int\frac{1}{(\lambda(x,d(x,y))+\lambda(x,d(x,z)))^2}|\beta_{1,i}(y)|d\mu(y)|\phi_{2,j}|d\mu(z)\\
&\lesssim |f_1|(Q_{1,i})\|\phi_{2,j}\|_{L^\infty(X,d,\mu)}\left(\int_{d(x,z)>\epsilon}\frac{d\mu(z)}{\lambda(x,d(x,z))^2}+\int_{d(x,z)\leq \epsilon, d(x,y)>\epsilon}\frac{1}{\lambda(x,\epsilon)^2}d\mu(z)\right)\\
&\lesssim |f_1|(Q_{1,i})\|\phi_{2,j}\|_{L^\infty(X,d,\mu)} \frac{1}{\lambda(x,\epsilon)}.
\end{align*}
Now using the above estimate we get
\begin{align}
\nonumber &\frac{1}{t^{\frac{1}{2}}}\int_{X\setminus\mathcal{B}}\left(\sum_{i,j}\chi_{(2R_{1,i})^c}\chi_{(2R_{2,j})}\chi_{d(.,R_{1,i})\leq \epsilon}|T_\epsilon(\beta_{1,i},\phi_{2,j})(x)|\right)^{\frac{1}{2}}d\mu(x)\\
\nonumber &\lesssim \frac{1}{t^{\frac{1}{2}}}\left(\int_{X}\sum_{i}\chi_{B(c_{R_{1,i}}, C\epsilon)}\frac{|f_1|(Q_{1,i})}{\lambda(x,\epsilon)}\right)^{\frac{1}{2}}\left(\int_{X}\sum_{j}\chi_{2R_{2,j}}\|\phi_{2,j}\|_{L^\infty(X,d,\mu)}\right)^{\frac{1}{2}}\\
&\lesssim \frac{1}{t^{\frac{1}{2}}}(\sum_i |f_1|(Q_{1,i}))^{\frac{1}{2}}(\sum_j |f_2|(Q_{2,j}))^{\frac{1}{2}}\lesssim \frac{1}{t^{\frac{1}{2}}}.
\label{bb11}
\end{align}
Combining \eqref{bb10},\eqref{bb11} we get
\begin{align}
I_b'\lesssim \frac{1}{t^{\frac{1}{2}}}.
\label{bb12}
\end{align}
\vspace{1em}
\noindent\textbf{Estimate for $I_b''$}:\\
\textbf{Case~1:} Consider $x$ such that $d(x, R_{1,i})>\epsilon$. Then using cancellation at the first component we get
$$|T_\epsilon(\beta_{1,i}, w_{2,j}f_2)(x)|\lesssim \omega\left(\frac{r(R_{1,i})}{d(x,c_{R_{1,i}})}\right)\frac{|f_1|(Q_{1,i})}{\lambda(x,d(x,c_{R_{1,i}}))}\frac{|f_2|(Q_{2,j})}{\lambda(x, d(x,c_{R_{2,j}}))}.$$
Now using Lemma~\ref{tolsa} at the last but one step we get,
\begin{align}
\nonumber &\frac{1}{t^{\frac{1}{2}}}\int_{X\setminus\mathcal{B}}\left(\sum_{i,j}\chi_{(2R_{1,i})^c}\chi_{(2R_{2,j})\setminus6^2Q_{2,j}}\chi_{d(.,R_{1,i})>\epsilon}|T_\epsilon(\beta_{1,i},w_{2,j}f_2)(x)|\right)^{\frac{1}{2}}d\mu(x)\\
\nonumber &\lesssim\frac{1}{t^{\frac{1}{2}}}\left(\int_{X}\sum_{i}\chi_{(2R_{1,i})^c}\omega\left(\frac{r(R_{1,i})}{d(x,c_{R_{1,i}})}\right)\frac{|f_1|(Q_{1,i})}{\lambda(x,d(x,c_{R_{1,i}}))}d\mu(x)\right)^{\frac{1}{2}}\\
\nonumber &\times\left(\int_{X}\sum_j \chi_{(2R_{2,j})\setminus6^2Q_{2,j}} \frac{|f_2|(Q_{2,j})}{\lambda(x, d(x,c_{R_{2,j}}))} d\mu(x) \right)^{\frac{1}{2}}\\
\nonumber &\lesssim \frac{\|\omega\|^{\frac{1}{2}}_{Dini(1)}}{t^{\frac{1}{2}}}(\sum_i |f_1|(Q_{1,i}))^{\frac{1}{2}}\left(\sum_j \int_{2R_{2,j}\setminus6^2Q_{2,j}} \frac{|f_2|(Q_{2,j})}{\lambda(x, d(x,c_{R_{2,j}}))} d\mu(x) \right)^{\frac{1}{2}}\\
%\nonumber &\lesssim (\sum_i |f_1|(Q_{1,i}))^{\frac{1}{2}}(\sum_j |f_2|(Q_{2,j}))^{\frac{1}{2}}\\
&\lesssim \frac{\|\omega\|^{\frac{1}{2}}_{Dini(1)}}{t^{\frac{1}{2}}}.
\label{bb13}
\end{align}
\textbf{Case~2:}
For $x\in (2R_{1,i})^c\cap (6^2 Q_{2,j})^c$ with $d(x, R_{1,i})\leq \epsilon$. Then
\begin{align*}
|T_\epsilon(\beta_{1,i},w_{2,j}f_2)(x)|&\lesssim \int_{R_{2,j}}\int_{R_{1,i}}\frac{1}{(\lambda(x,d(x,y))+\lambda(x,d(x,z)))^2}|\beta_{1,i}(y)||w_{2,j}f_2|d\mu(y)d\mu(z)\\
&\lesssim \frac{|f_1|(Q_{1,i})}{\lambda(x,\epsilon)}\frac{|f_2|(Q_{2,j})}{\lambda(x,d(x, c_{R_{2,j}}))}.
\end{align*}
Now using the above estimate and Lemma~\ref{tolsa},
\begin{align}
\nonumber & \frac{1}{t^{\frac{1}{2}}}\int_{X\setminus\mathcal{B}}\left(\sum_{i,j}\chi_{(2R_{1,i})^c}\chi_{(2R_{2,j})\setminus6^2Q_{2,j}}\chi_{d(.,R_{1,i})\leq\epsilon}|T_\epsilon(\beta_{1,i},w_{2,j}f_2)(x)|\right)^{\frac{1}{2}}d\mu(x)\\
\nonumber &\lesssim \frac{1}{t^{\frac{1}{2}}}\left(\sum_i |f_1|(Q_{1,i})\int_{B(c_{R_{1,i}}, C\epsilon)}\frac{d\mu(x)}{\lambda(x,\epsilon)}\right)^{\frac{1}{2}}\left(\sum_j |f_2|(Q_{2,j}) \int_{(2R_{2,j})\setminus6^2Q_{2,j}}\frac{d\mu(x)}{\lambda(x,d(x, c_{R_{2,j}}))}\right)^{\frac{1}{2}}\\
&\lesssim \frac{1}{t^{\frac{1}{2}}}.
\label{bb14} 
\end{align}

This completes the estimate for $I_b''$. We get from \eqref{bb13} and \eqref{bb14}
\begin{align}
I_b''\lesssim \frac{1}{t^{\frac{1}{2}}}.
\label{bb15}
\end{align}
Thus
\begin{align}
I_b\lesssim \frac{1}{t^{\frac{1}{2}}}.
\label{bb16}
\end{align}
From \eqref{revisedbb1},~\eqref{bb5},~\eqref{bb7},~\eqref{bb16} we get
\begin{align}
\label{bb17}
\mathcal{S}_{1}\lesssim \frac{C_{T, \|\omega\|_{Dini(1)}}}{t^{\frac{1}{2}}}.
\end{align}
Arguing along the ideas developed in \cite{DLi,marti} and providing necessary modifications (as we have given for $\mathcal{S}_1$) we can prove
\begin{align}
\mathcal{S}_{2}\lesssim \frac{C_{T, \|\omega\|_{Dini(1)}}}{t^{\frac{1}{2}}}.
\label{bb31}
\end{align}
From \eqref{bb1}, \eqref{bb17} and \eqref{bb31} we get
\begin{align}
\label{bb32}
&\mu(\{x\in X\setminus\mathcal{B}:\sum\limits_{i}|T_\epsilon(\beta_{1,i}, \sum\limits_{j\in \mathcal{T}_i}\beta_{2,j})(x)|>\frac{t}{8}\})\lesssim \frac{C_{T, \|\omega\|_{Dini(1)}}}{t^{\frac{1}{2}}}.
\end{align}
These estimates imply $\mu(I_{BB})\lesssim \frac{C_{T, \|\omega\|_{Dini(1)}}}{t^{\frac{1}{2}}}$. This completes the proof.
\end{proof}

\subsection{Cotlar's lemma}
\label{CL}
In this subsection, end-point boundedness for the multilinear maximal Calder\'on-Zygmund operator will be addressed. We will conclude this with a Cotlar's inequality for multi-linear Calder\'on-Zygmund operators on non-homogeneous spaces. Cotlar's inequality for Calder\'on-Zygmund operators for polynomially growth measures was first obtained in \cite{NTV2}. The proof substantially depends on the Guy-David lemma mentioned in \cite{NTV2}. In \cite{Hlyy12}, the authors extended this to the general framework of non-homogeneous spaces. The proof the Theorem~\ref{T*} requires the following lemma which enables us to pass from arbitrary balls to ``doubling ball". The idea of the proof is inspired by~Lemma~3.1 in \cite{Hlyy12}.
\begin{lem}
\label{double}
Let $T$ be a bilinear Calder\'{o}n-Zygmund operator defined on an  upper doubling, geometrically doubling metric measure space $(X,d,\mu)$. Then for any $x\in supp(\mu),~r>0$, there is an integer $k=k(x,r)\in \mathbb{N}$ such that 
\begin{align}
\label{c6compare}
|\tilde{T}_r(f_1,f_2)(x)-\tilde{T}_{5R}(f_1,f_2)(x)|\lesssim_{\lambda,\mu}\mathscr{M}f_{1}(x)\mathscr{M}f_{2}(x)
\end{align}
where $R=5^{k-1}r$ and the ball $B(x,R)$ is doubling i.e., $\mu(B(x,25R))\lesssim \mu(B(x,R))$.
\end{lem}
\begin{proof}
Let $x\in supp(\mu)$ and $r>0$. We claim that there is some $j\in \mathbb{N}$ such that $\mu(B(x, 5^{j+1}r))\leq 4C_{\lambda}^{6}\mu(B(x, 5^{j-1}r))$. If the claim is not true then using the upper doubling condition we get
$$\mu(B(x,r))\leq (4C_{\lambda}^6)^{-j}\mu(B(x, 5^{2j}r ))\lesssim (4C_{\lambda}^6)^{-j}\lambda(x, 5^{2j}r)\leq 5^{-j}\lambda(x,r).$$
Letting $j\rightarrow \infty$ we get $\mu(B,r)=0$ but this is a contradiction as $\mu(B,r)>0$ for $x\in supp(\mu)$. Thus the claim is true.
 
Let $k$ be the smallest integer such that the claim happens thus $\mu(B(x,25R))\lesssim \mu(B(x,R))$ where $R=r_{k-1}=5^{k-1}r$. Denote $~~~ \mu_j=\mu(B(x,5^jr)) ~~~\text{for}~~j=1,\dots,k.$,  then we have 
$
\mu_{j+1}\leq (2C_{\lambda}^3)^{j+2-k}\mu_{k}$ and $\lambda(x,r_k)\leq (C_{\lambda}^3)^{k-j-1}\lambda(x,r_{j+1})~\text{for}~j=1,\dots,k.
%\label{compare1}
$
Now using the above estimates we obtain
\begin{small}
\begin{align}
&\nonumber|\tilde{T}_r(f_1,f_2)(x)-\tilde{T}_{5R}(f_1,f_2)(x)|\\
&\nonumber\leq \sum\limits_{j=1}^{k-1}\int_{B(x,r_{j+1})\setminus B(x,r_{j})}\frac{|f_1(y_1)|}{\lambda(x,d(x,y_1))}d\mu(y_1)\sum\limits_{l=1}^{k-1}\int_{B(x,r_{l+1})\setminus B(x,r_{l})}\frac{|f_2(y_2)|}{\lambda(x,d(x,y_2))}d\mu(y_2)\\
\nonumber&+\frac{1}{\lambda(x,r)}\int_{B(x,r)}|f_1(y_1)|d\mu(y_1)\sum\limits_{l=1}^{k-1}\int_{B(x,r_{l+1})\setminus B(x,r_{l})}\frac{|f_2(y_2)|}{\lambda(x,d(x,y_2))}d\mu(y_2)\\
\nonumber&+\sum\limits_{j=1}^{k-1}\int_{B(x,r_{j+1})\setminus B(x,r_{j})}\frac{|f_1(y_1)|}{\lambda(x,d(x,y_1))}d\mu(y_1)\frac{1}{\lambda(x,r)}\int_{B(x,r)}|f_2(y_2)|d\mu(y_2)\\
\nonumber&\lesssim \sum\limits_{j=1}^{k-1}2^{j-k}\mathscr{M}f_1(x)\sum\limits_{l=1}^{k-1}2^{l-k}\mathscr{M}f_2(x)+\mathscr{M}f_1(x)\sum\limits_{l=1}^{k-1}2^{l-k}\mathscr{M}f_2(x)\\
\nonumber&+\mathscr{M}f_2(x)\sum\limits_{j=1}^{k-1}2^{j-k}\mathscr{M}f_1(x)\\
&\lesssim \mathscr{M}f_1(x)\mathscr{M}f_2(x).
\end{align}
\end{small}
\end{proof}

\noindent\textbf{Proof of Theorem~\ref{T*}:}\\
For multilinear Calder\'on-Zygmund operators L. Grafakos and R. Torres obtained the Cotlar's inequality in \cite{GT1}. We combine their ideas with Lemma~\ref{double} to obtain Theorem~\ref{T*}. Let $x\in X$ and $r>0$, then by Lemma~\ref{double}
\begin{align}
\label{c6use1}
|\tilde{T}_r(f_1,f_2)(x)-\tilde{T}_{5R}(f_1,f_2)(x)|\lesssim_{\lambda,\mu}\mathscr{M}f_{1}(x)\mathscr{M}f_{2}(x)
\end{align}
where $R=5^{k-1}r$ and $\mu(B(x,25R))\lesssim \mu(B(x,R))$. For any $z\in B(x,R)$
\begin{equation}
\label{c6use2}
\tilde{T}_{5R}(f_1,f_2)(z)=T(f_1,f_2)(z)-T(f_{1}^0,f_2^0)(z)
\end{equation}
where $f_{i}^0(x)=f_{i}\chi_{B(x,5R)}$ for $i=1,2$. Denote $S_r(x)=\lbrace\vec{y}\in X^{2}:\max\limits_{i=1,2} d(x,y_i)\leq r\rbrace$ and by the regularity condition we get
\begin{small}
\begin{align*}
&|\tilde{T}_{5R}(f_1,f_2)(x)-\tilde{T}_{5R}(f_1,f_2)(z)|\\
&\lesssim \int_{S_{5R}(x)^c}\left(\min\limits_{i=1,2}\frac{1}{\lambda(x,d(x,y_i))^2}\right)\omega(\frac{d(x,z)}{d(x,y_1)+d(x,y_2)})\prod\limits_{i=1}^{2}|f_i(y_i)|d\vec{\mu}\\
&\lesssim I+II+III
\end{align*}
\begin{align*}
\mbox{where}\\
&I=\int_{d(x,y_1)\leq 5R}\int_{d(x,y_2)>5R}\left(\min\limits_{i=1,2}\frac{1}{\lambda(x,d(x,y_i))^2}\right)\omega(\frac{d(x,z)}{d(x,y_1)+d(x,y_2)})\prod\limits_{i=1}^{2}|f_i(y_i)|d\vec{\mu}\\
&\leq M_{\lambda}(f_1)(x)\sum\limits_{l\geq 0}\omega(2^{-l})\frac{1}{\lambda(x,5R2^{l+1})}\int_{d(x,y_2)\leq 5R2^{l+1}}|f_2(y_2)|d\mu(y_2)\\
&\lesssim M_{\lambda}(f_1)(x)M_{\lambda}(f_2)(x)\\
\mbox{and}\\
&II=\int_{d(x,y_2)\leq 5R}\int_{d(x,y_1)>5R}\left(\min\limits_{i=1,2}\frac{1}{\lambda(x,d(x,y_i))^2}\right)\omega(\frac{d(x,z)}{d(x,y_1)+d(x,y_2)})\prod\limits_{i=1}^{2}|f_i(y_i)|d\vec{\mu}
\end{align*}
Arguing as in $I$ we obtain $II\lesssim M_{\lambda}(f_1)(x)M_{\lambda}(f_2)(x).$
For $III$ we observe
\begin{align*}
&III \\
&=\int_{d(x,y_1)>5R}\int_{d(x,y_2)>5R}\left(\min\limits_{i=1,2}\frac{1}{\lambda(x,d(x,y_i))^2}\right)\omega(\frac{d(x,z)}{d(x,y_1)+d(x,y_2)})\prod\limits_{i=1}^{2}|f_i(y_i)|d\vec{\mu}\\
& \leq \sum\limits_{j\geq 0}\int\limits_{d(x,y_2)\leq 2^{j+1}5R}|f_2(y_1)|d\mu(y_2)\frac{1}{\lambda(x,2^{j+1}5R)^2}\int\limits_{d(x,y_1)\leq 2^{j+1}5R}|f_1(y_1)|d\mu(y_1)\omega(2^{-j})\\
&+\sum\limits_{l\geq 0}\sum\limits_{j\geq 1}^{l}\int\limits_{2^j 5R<d(x,y_1)\leq 2^{j+1}5R}|f_1(y_1)|d\mu(y_1)\frac{1}{\lambda(x,2^{l+1}5R)^2}\int\limits_{d(x,y_2)\leq 2^{l+1}5R}|f_{2}|d\mu(y_2)\omega(2^{-l})\\
&\lesssim M_{\lambda}(f_1)(x)M_{\lambda}(f_2)(x).
\end{align*}
\end{small}
Hence 
\begin{equation}
\label{c6use3}
|\tilde{T}_{5R}(f_1,f_2)(x)-\tilde{T}_{5R}(f_1,f_2)(z)|\lesssim M_{\lambda}(f_1)(x)M_{\lambda}(f_2)(x).
\end{equation}
Now from \eqref{c6use2} and \eqref{c6use3} we obtain
\begin{equation}
\label{c6use4}
|\tilde{T}_{5R}(f_1,f_2)(x)|\lesssim M_{\lambda}(f_1)(x)M_{\lambda}(f_2)(x)+|T(f_1,f_2)(z)|+|T(f_{1}^0,f_2^0)(z)|
\end{equation}
for all $z\in B(x,R)$. Fix $0<\eta<\frac{1}{2}$. Raising \eqref{c6use4} to the power $\eta$ and averaging over $B=B(x,R)$ we get
\begin{small}
\begin{align}
\label{c6use5}
\nonumber|\tilde{T}_{5R}(f_1,f_2)(x)|^{\eta}\lesssim &M_{\lambda}(f_1)(x)M_{\lambda}(f_2)(x)+\mathscr{M}(|T(f_1,f_2)|^{\eta})(x)\\
&+\frac{1}{\mu(B(x,R))}\int_{B(x,R)}|T(f_{1}^0,f_2^0)(z)|^{\eta}d\mu(z).
\end{align}
Now arguing as in \cite{GT1} and using the facts that $T$ maps $L^1(X,d,\mu)\times L^1(X,d,\mu)$ to $L^{\frac{1}{2}, \infty}(X,d,\mu)$ (Theorem~\ref{endpoint}) and $\mu(B(x,25R))\lesssim \mu(B(x,R))$ we obtain
\begin{align}
\nonumber\frac{1}{\mu(B(x,R))}\int_{B(x,R)}|T(f_{1}^0,f_2^0)(z)|^{\eta}d\mu(z)&\leq C_{\eta,T}\mu(B(x,R))^{-2\eta}(\prod_{i=1}^{2}\|f_{i}^0\|_{L^1(X,d, \mu)})^{\eta}\\
\nonumber &\leq C_{\eta,T}\left(\prod_{i=1}^{2}\frac{1}{\mu(B(x,25R))}\int_{B(x,5R)}|f_i|d\mu(y_i)\right)^{\eta}\\
&\leq C_{\eta,T}(\mathscr{M}(f_1)(x)\mathscr{M}(f_2)(x))^{\eta}
\label{c6use6}
\end{align}
Now combining the estimates \eqref{c6use1}, \eqref{c6use4}, \eqref{c6use5} and \eqref{c6use6}, we obtain 
\begin{align}
\label{usefinal}
\nonumber|\tilde{T}_r(f_1,f_2)(x)|\leq C_{\eta,\lambda,T}M_{\lambda}(f_1)(x)M_{\lambda}(f_2)(x)&+\mathscr{M}_{\eta}(|T(f_1,f_2)|)(x)\\
&+\mathscr{M}(f_1)(x)\mathscr{M}(f_2)(x).
\end{align}
\end{small}
This together with Remark~\ref{comparison} proves \eqref{cotlar} . Now using the facts that $\mathscr{M}$ maps $L^{p,\infty}(X,d,\mu)$ to $L^{p,\infty}(X,d,\mu)$ for all $1<p<\infty$ and $T$ maps $L^1(X,d,\mu)\times L^1(X,d,\mu)$ to $L^{\frac{1}{2},\infty}(X,d,\mu)$, we get $T^*$ maps $L^1(X,d,\mu)\times L^1(X,d,\mu)$ to $L^{\frac{1}{2},\infty}(X,d,\mu)$. 
\qed

\bibliographystyle{plain}

\end{document}